\newtheorem{coro}{Corollary}
\newcommand*{\ucong}{\mathrel{\text{\raisebox{.25ex}{\rotatebox[origin=c]{180}{$\cong$}}}}}
\def\ec{\mathrel{\hbox{$\copy\Ea\kern-\wd\Ea\raise-3.5pt\hbox{$\sim$}$}}}
\newcommand{\bx}{\mathbf{x}}
\newcommand{\bu}{\mathbf{u}}
\newcommand{\bv}{\mathbf{v}}
\newcommand{\bI}{\mathbf{I}}
\newcommand{\bn}{\mathbf{n}}
\newcommand{\beq}{\begin{equation}}
\newcommand{\eeq}{\end{equation}}
\newcommand{\beqa}{\begin{eqnarray}}
\newcommand{\eeqa}{\end{eqnarray}}
\newcommand{\cP}{{\mathcal P}}
\newcommand{\by}{{\bf y}}
\begin{document}

\title{Robust block preconditioners for poroelasticity\thanks{
This work of Jinchao Xu is supported in part by the U.S. Department of Energy, Office of Science, Office of Advanced Scientific Computing Research as part of the Collaboratory on Mathematics for Mesoscopic Modeling of Materials under contract number DE-SC0009249.
}
}
\subtitle{
}


\author{Shuangshuang Chen \and Qingguo Hong \and Jinchao Xu$^{*}$ \and 
Kai Yang         
}


\institute{Shuangshuang  Chen\at      
Beijing Institute for Scientific and Engineering Computing (BISEC),\\
 Beijing University of Technology, Beijing, 100124, China.\\
 \email{chenshuangshuang@bjut.edu.cn}
\and
           Qingguo Hong \at
             The Center for Computational Mathematics and Applications,\\
              Department of Mathematics, \\Pennsylvania State University, University Park, PA 16802, U.S.A\\
               \email{huq11@psu.edu} 
           \and
$*$ Corresponding author: Jinchao Xu \at
              The Center for Computational Mathematics and Applications,\\
              Department of Mathematics, \\Pennsylvania State University, University Park, PA 16802, U.S.A\\
              \email{xu@math.psu.edu}         
\and
 Kai Yang \at
            The Center for Computational Mathematics and Applications,\\
              Department of Mathematics, \\Pennsylvania State University, University Park, PA 16802, U.S.A\\
               \email{yangkai1001@gmail.com} 
}

\date{Received: date / Accepted: date}

\maketitle
\begin{abstract}
In this paper we study the linear systems arising from discretized poroelasticity problems. We formulate one block preconditioner for the two-filed Biot model and several preconditioners 
for the classical three-filed Biot model under the unified relationship framework between well-posedness and preconditioners.  By the unified theory, we show all the considered preconditioners are uniformly optimal with respect to material and discretization parameters.  Numerical tests demonstrate the robustness of these preconditioners.
\keywords{Poroelasticity \and Biot model \and stable finite element method \and robust block preconditioners}
\end{abstract}

\section{Introduction}
Poroelasticity, the study of the fluid flow in porous and elastic media,  couples the elastic deformation with the fluid flow in porous media.  The Biot model has wide applications in geoscience, biomechanics, and many other fields.  Numerical simulations of poroelasticity are challenging. A notorious instability of the numerical discretization method for the poroelasticity model is the unphysical oscillation of the pressure under certain conditions \cite{Murad.M;Loula.A1994a}.  There are many possible sources of this instability. One of the most significant sources is the instability of the finite element approximation for the coupled systems \cite{Haga.J;Osnes.H;Langtangen.H2012b,Axelsson.O;Blaheta.R;Byczanski.P2012a}.  This motivates us to study the well-posedness of the finite element discretization. However, we do not look further into the details of the instability of the Biot model and refer interested readers to \cite{Haga.J;Osnes.H;Langtangen.H2012b,Axelsson.O;Blaheta.R;Byczanski.P2012a,Aguilar.G;Gaspar.F;Lisbona.F;Rodrigo.C2008a}.

Another challenge associated with the Biot model is that of developing efficient linear solvers.  Direct solvers have poor performance when the size of problems becomes large.  Iterative solvers are good alternatives, as they exhibit better scalability.  However, the convergence of iterative solvers is very much problem-dependent such that there is a need for robust preconditioners.  For example, the multigrid preconditioned Krylov subspace method usually has optimal convergence rate for the Poission equation and many other symmetric positive definite problems \cite{Hackbusch.W1985a,Xu.J1992a}.  However, for poroelasticity problems, coupled systems of equations must be solved, which are known to be indefinite and ill-conditioned \cite{Ferronato.M;Gambolati.G;Teatini.P2001a}.
  Preconditioning techniques for poroelasticity problems have been the subject of considerable research in the literature \cite{Axelsson.O;Blaheta.R;Byczanski.P2012a,Lipnikov.K2002a,Bergamaschi.L;Ferronato.M;Gambolati.G2007a,Phoon.K;Toh.K;Chan.S;Lee.F2002a,Toh.K;Phoon.K;Chan.S2004a,Haga.J;Osnes.H;Langtangen.H2011a,Haga.J;Osnes.H;Langtangen.H2012a,Turan.E;Arbenz.P2014a} and most of the techniques developed are based on the Schur complement approach. In \cite{Phoon.K;Toh.K;Chan.S;Lee.F2002a,Toh.K;Phoon.K;Chan.S2004a}, diagonal approximation of the Schur complement preconditioner is used to precondition two-field formulation of the Biot model.  In \cite{Haga.J;Osnes.H;Langtangen.H2011a,Haga.J;Osnes.H;Langtangen.H2012a}, Schur complement preconditioners are also studied for two-field formulation with the algebraic multigrid (AMG) as the preconditioner for the elasticity block.  In \cite{Axelsson.O;Blaheta.R;Byczanski.P2012a}, Schur complement approaches for three-field formulation are investigated. Recently, robust block diagonal and block triangular preconditioners are developed in \cite{adler2017robust}  for two-field Biot model. And for classical 
three-filed Biot model, the robust block preconditioners are designed in \cite{hong2018parameter,hong2018conservative} based on the uniform stability estimates. Robust preconditioner for a new three-field formulation introducing a total pressure as the third unknown is analyzed in~\cite{lee2017parameter}. Robust block diagonal and block triangular preconditioners are also developed in \cite{adler2019robust} based on the descretization proposed in \cite{rodrigo2018new}. Other robustness analysis for fixed-stress splitting method and Uzawa-type method for multiple-permeability poroelasticity systems are presented in \cite{hong2018fixed} and \cite{hong2019parameter}.

The focus of this paper is on the stability of the linear systems after time discretization and several robust preconditioners for the iterative solvers under the unified relationship framework between well-posedness and preconditioners. The block preconditioners in \cite{adler2017robust} for two field formulation and in \cite{hong2018parameter,adler2019robust} for the three field formulation can be briefly written in this framework.  In addition, we analyze the well-posedness of the linear systems and propose other optimal preconditioners for the Biot model \cite{Haga.J;Osnes.H;Langtangen.H2012b} based on the mapping property \cite{Mardal.K;Winther.R2011a}.  By proposing optimal block preconditioners, we convert the solution of complicated coupled system into that of a few SPD systems on each of the fields. 

The rest of this paper is organized as follows. In Section \ref{sec:biot}, we give a brief introduction of the Biot model. In Section \ref{sec:well-posed}, we introduce two theorems in order to prove well-posedness. In Section \ref{sec:precondition},  we address the unified framework indicating the relationship between preconditioning and well-posedness of linear systems.  In Section \ref{sec:2field} and Section \ref{sec:3field}, we show the well-posedness and several optimal preconditioners for the Biot model under the unified framework.  In Section \ref{sec:numerics}, we present numerical examples to demonstrate the robustness of these preconditioners. 

\section{The Biot model}
\label{sec:biot}
The poroelastic phenomenon is usually characterized by the Biot model \cite{Biot.M1941a,Biot.M1955a}, which couples structure displacement $\mathbf{u}$, fluid flux $\mathbf{v}$, and fluid pressure $p$. Consider a bounded and simply connected Lipschitz domain $\Omega\subset\mathbb{R}^n (n=2,3)$ of poroelastic material.  As the deformation is assumed to be small, we assume that the deformed configuration coincides with the undeformed reference configuration.  Let $\sigma$ denote the total stress in this material. From the balance of the forces, we first have
$$-\nabla\cdot\sigma=f,~~\mbox{ in }\Omega.$$
In addition to the elastic stress 
$$\sigma_e=2 \mu \epsilon(\mathbf{u}) + \lambda (\nabla \cdot  \mathbf{u}) \bI,$$ 
the fluid pressure also contributes to the total stress, which results in the following constitutive equation:
$$\sigma=\sigma_e-\alpha p\mathbf{I}.$$
Here, $\mu:=\frac{E}{2(1+\nu)}$ and $\lambda:=\frac{\nu E}{(1+\nu)(1-2\nu)}$ are the Lam\'e constants and $\nu\in[0,1/2)$ is the Poisson ratio, the symmetric gradient is defined by $\epsilon(\mathbf{u}):=(\nabla\mathbf{u}+\nabla\mathbf{u}^T)/2$, and  $\alpha$ is the Biot-Willis constant.  Therefore, we obtain the following momentum equation
$$- \nabla \cdot (2 \mu \epsilon(\mathbf{u}) + \lambda (\nabla \cdot  \mathbf{u}) \bI - \alpha p \bI) = f, ~~\mbox{ in }\Omega.$$

Let $\eta$ denote the fluid content. Then, the mass conservation of the fluid phase implies that

\begin{equation}
\label{eq:mass_cons}
\partial_t\eta+\nabla\cdot \bv=g~~\mbox{ in } \Omega,
\end{equation}
where $g$ is the source density.  The fluid content is assumed to satisfy the following constitutive equation:
\begin{equation}
\label{eq:constitutive_eta}
\eta = Sp+\alpha\nabla\cdot\bu,
\end{equation}
where $S$ is the fluid storage coefficient.  We also have the Biot-Willis constant $\alpha$ in this equation, as this poroelastic model is assumed to be a reversible process and the increment of work must be an exact differential \cite{Biot.M1941a,Rice.J2001a,CHENG.A2014a}.
Based on (\ref{eq:mass_cons}) and (\ref{eq:constitutive_eta}), the following equation holds
$$\alpha \nabla \cdot \dot{\mathbf{u}} + \nabla \cdot \mathbf{v} +S \dot{p}= g,~~\mbox{ in }\Omega.$$
According to Darcy's law, we have another equation:

$$k^{-1}\mathbf{v} +\nabla p =r,$$
where $k$ is the fluid mobility and $r$ is the body force for the fluid phase.

We consider all the parameters to be positive.  The following boundary conditions are assumed:
\begin{equation}
\label{eq:bdc_u}
\mathbf{u}=\mathbf{u}_D, \mbox{ on } \Gamma_{D,u},~~ \sigma \bn= g_N, \mbox{ on } \Gamma_{N,u},
\end{equation}
\begin{equation}
\label{eq:bdc_v}
\mathbf{v}\cdot \bn=\mathbf{v}_D,\mbox{ on } \Gamma_{D,v}, ~~ p = p_N, \mbox{ on } \Gamma_{N,v},
\end{equation}
where $\Gamma_{D,u}\cap\Gamma_{N,u}=\emptyset$, $\bar{\Gamma}_{D,u}\cup\bar{\Gamma}_{N,u}=\partial\Omega$ and $\Gamma_{D,v}\cap\Gamma_{N,v}=\emptyset$, $\bar{\Gamma}_{D,v}\cup\bar{\Gamma}_{N,v}=\partial\Omega$.

The initial conditions are as follows:
$$\mathbf{u}(x,0)=\mathbf{u}_0(x), ~~ p(x,0)=p_0(x), ~~ $$
where $\mathbf{u}_0$ and $p_0$ are given functions.

%
We use the backward Euler method to discretize the time derivative $\dot{\mathbf{u}}$:
$$\dot{\mathbf{u}}(t_n)\approx \frac{\mathbf{u}(t_n)-\mathbf{u}(t_{n-1})}{\Delta t},$$  
where $\Delta t$ is the time step size. More sophisticated implicit time discretizations result in similar linear systems.  As we are focusing on the properties of the linear systems resulting from the time discretized problem, we consider only the backward Euler method for the sake of brevity.  After the implicit time discretization, fast solvers are needed to solve the following three-field system:
\begin{equation}
\label{eq:biot_3field}
\left\{
\begin{aligned}
-\nabla\cdot(2 \mu \epsilon(\mathbf{u}) + \lambda (\nabla \cdot \mathbf{u})\bI)+\alpha\nabla p&=f,\\
k^{-1}\mathbf{v}+\nabla p&=r,\\
\frac{\alpha}{\Delta t}\nabla\cdot\mathbf{u}+\nabla\cdot\mathbf{v}+\frac{S}{\Delta t} p&=g.\\
\end{aligned}
\right.
\end{equation}
Note that the right-hand side of the last equation in (\ref{eq:biot_3field}) includes terms from previous time step due to the time discretization.  As the exact form of this right-hand side does not affect the well-posedness of the linear system, we keep using $g$ to denote it.  We apply this convention to all the right-hand sides in similar situations, throughout the rest of this paper.

To reduce the number of variables, the fluid flux $\bv$ is eliminated to obtain the following two-field system: 
\begin{equation}
\label{eq:2by2}
\left\{
\begin{aligned}
-\nabla\cdot(2 \mu \epsilon(\mathbf{u}) + \lambda (\nabla \cdot \mathbf{u})\bI)+\alpha\nabla p&=f,\\
\frac{\alpha}{\Delta t}\nabla\cdot\mathbf{u}-k\Delta p+\frac{S}{\Delta t}p&=g.\\
\end{aligned}
\right.
\end{equation}

In the rest of this paper, we develop block preconditioners for both the two-field and three-field systems.

\section{Well-posedness of linear systems}
\label{sec:well-posed}
In this section, we first introduce several theorems to prove the well-posedness of the following saddle point problem:
Find $(u,p)\in {\mathbb M}\times{\mathbb N}$ such that $\forall (\phi,q)\in {\mathbb M}\times {\mathbb N}$, the following equations hold
\begin{equation}
\label{eq:saddle_abc}
\left\{
\begin{aligned}
a(u,\phi)&+ b(\phi,p)&=& \langle f,\phi\rangle,\\
b(u,q)&-c(p,q)&=&\langle{g,q}\rangle.\\
\end{aligned}
\right.
\end{equation}
Here, $\mathbb{M}$ and $\mathbb{N}$ are given Hilbert spaces with the inner products $(\cdot,\cdot)_{\mathbb{M}}$ and $(\cdot,\cdot)_{\mathbb{N}}$, respectively. The corresponding norms are denoted by $\|\cdot\|_{\mathbb{M}}$ and $\|\cdot\|_{\mathbb{N}}$.

Given $b(\cdot,\cdot)$, the following kernel spaces are important in the analysis:
$${\mathbb Z}=\{u\in{\mathbb M} | b(u,q)=0, \forall q\in {\mathbb N}\},$$
$${\mathbb K} = \{p\in \mathbb{N} | b(\phi,p)=0,\forall \phi\in\mathbb{M}\}.$$

We consider the orthogonal decomposition of $u\in \mathbb{M}$ and $p\in \mathbb{N}$ as follows:
$$u=u_0+\bar{{u}},~~u_0\in \mathbb{Z},~ \bar{{u}}\in \mathbb{Z}^{\perp}, ~~~p=p_0+\bar p, ~~p_0\in \mathbb{K},~ \bar p\in\mathbb{K}^{\perp}.$$

We will use these notation to denote the components of functions in the kernel spaces and their orthogonal complements throughout the rest of this section. 

The well-posedness of (\ref{eq:saddle_abc}) can be proved provided that $a(\cdot,\cdot)$, $b(\cdot,\cdot)$, and $c(\cdot,\cdot)$ satisfy certain properties.  In the first case, we assume that $a(\cdot,\cdot)$ is coercive on $\mathbb M$.

\begin{theorem}(\cite{Xu.J2015a})
\label{thm:babuska1}
Assume that $a(\cdot,\cdot)$ is symmetric, $c(\cdot,\cdot)$ is symmetric positive semi-definite. Let $|\cdot|_e$ be a semi-norm on $\mathbb{N}$ such that $|p|_e\neq 0$, $\forall p\in \mathbb{N}\backslash\mathbb{K}$. Assume that the following inequalities
\begin{equation}
\label{eq:bounded_a}
a(u,\phi)\leq C_a\|u\|_{\mathbb{M}}\|\phi\|_{\mathbb{M}}, \forall {u},\phi\in {\mathbb M}
\end{equation}

\begin{equation}
\label{eq:bounded_b}
b(u,p)\leq C_b\|u\|_{\mathbb{M}}|p|_e, \forall {u}\in {\mathbb M}, p\in{\mathbb N}
\end{equation}

\begin{equation}
\label{eq:elliptic_a}
a(u,u)\geq \gamma_a\|u\|_{\mathbb{M}}^2, ~~\forall {u}\in {\mathbb M}
\end{equation}

\begin{equation}
\label{eq:infsup_b}
\sup_{u\in{\mathbb M}}\frac{b(u,q)}{\|u\|_{\mathbb{M}}}\geq \gamma_b|q|_e, \forall q\in\mathbb{N}
\end{equation}
hold with the constants $C_a$, $C_b$, $\gamma_a$ and $\gamma_b$ independent of parameters. In addition, assume that  $\forall q\in \mathbb{K}\backslash\{0\}$, $c(q,q)>0$.  Then, Problem \eqref{eq:saddle_abc} is uniformly well-posed with respect to parameters under the norms $\|\cdot\|_{\mathbb{M}}$ and $\|\cdot\|_{\mathbb{N}}$,
where $\|q\|_{\mathbb{N}}^2:=|\bar q|_{e}^2+|q|_c^2$ and $|q|_c^2=c(q,q)$.
\end{theorem}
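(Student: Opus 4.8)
The plan is to establish the hypotheses of the abstract Brezzi-type well-posedness theory by verifying that the composite bilinear form on $\mathbb{M}\times\mathbb{N}$ is bounded and satisfies a global inf-sup condition under the norm $\|q\|_{\mathbb{N}}^2=|\bar q|_e^2+|q|_c^2$. Concretely, define the bilinear form
\begin{equation}
\label{eq:composite}
\mathcal{B}((u,p),(\phi,q)):=a(u,\phi)+b(\phi,p)+b(u,q)-c(p,q),
\end{equation}
so that \eqref{eq:saddle_abc} reads $\mathcal{B}((u,p),(\phi,q))=\langle f,\phi\rangle+\langle g,q\rangle$. Uniform well-posedness will follow from the Babu\v{s}ka theory once I show that $\mathcal{B}$ is uniformly bounded and satisfies a uniform inf-sup condition $\inf_{(u,p)}\sup_{(\phi,q)}\mathcal{B}/\big(\|(u,p)\|\,\|(\phi,q)\|\big)\geq\gamma>0$, where $\|(u,p)\|^2=\|u\|_{\mathbb{M}}^2+\|p\|_{\mathbb{N}}^2$.

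\textbf{Boundedness} is the routine part: continuity of $\mathcal{B}$ follows directly from \eqref{eq:bounded_a}, \eqref{eq:bounded_b}, and the semi-definite boundedness of $c$, using $|p|_e\le\|p\|_{\mathbb{N}}$ and $|p|_c\le\|p\|_{\mathbb{N}}$, so the bound constant depends only on $C_a$, $C_b$, and the continuity constant of $c$. The heart of the argument is the inf-sup (stability) estimate, which I would prove by the standard \emph{constructive test-function} technique. Given $(u,p)$, I decompose $p=p_0+\bar p$ with $p_0\in\mathbb{K}$, $\bar p\in\mathbb{K}^\perp$, and build a test pair in three movements. First, taking $(\phi,q)=(u,-p)$ recovers $a(u,u)-(-c(p,p))=a(u,u)+c(p,p)\ge\gamma_a\|u\|_{\mathbb{M}}^2+|p|_c^2$, controlling the full $\mathbb{M}$-norm of $u$ (via coercivity \eqref{eq:elliptic_a}) and the $|\cdot|_c$ part of $p$. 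Second, to control $|\bar p|_e$, I invoke the inf-sup condition \eqref{eq:infsup_b}: there exists $w\in\mathbb{M}$ with $\|w\|_{\mathbb{M}}=|\bar p|_e$ and $b(w,\bar p)\ge\gamma_b|\bar p|_e^2$; since $b(w,p)=b(w,\bar p)$ (as $b$ annihilates $\mathbb{K}$), testing with $(\phi,q)=(\delta w,0)$ for a small parameter $\delta>0$ produces the term $b(w,p)$, with the cross-term $a(u,w)$ absorbed by Young's inequality using the already-controlled $\|u\|_{\mathbb{M}}$. Third, the component $p_0\in\mathbb{K}$ is controlled by the hypothesis $c(q,q)>0$ for $q\in\mathbb{K}\setminus\{0\}$, i.e. testing with $(0,-p_0)$ yields $c(p,p_0)$, which together with the definiteness of $c$ on $\mathbb{K}$ pins down $\|p_0\|_{\mathbb{N}}$; note that $|p_0|_e=0$ since $p_0\in\mathbb{K}$, so the $|\cdot|_e$ part of $\|p_0\|_{\mathbb{N}}$ is automatically zero and only the $|\cdot|_c$ contribution survives, which is exactly what $c$-definiteness supplies.

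\textbf{The main obstacle} is combining these three test functions into a single pair $(\phi,q)$ with a uniform lower bound, since the parameter $\delta$ weighting the second movement and the coefficients in front of the third must be chosen so that all desirable terms add constructively while the parameter-dependent cross-terms ($a(u,w)$, $c(p,p_0)$, and $c(p,\bar p)$) are dominated. The delicate point is that $c$ is only semi-definite globally, so one cannot use coercivity of $c$ on all of $\mathbb{N}$; instead the $|\cdot|_e$-control on $\mathbb{K}^\perp$ (from \eqref{eq:infsup_b}) and the $|\cdot|_c$-control on $\mathbb{K}$ (from the definiteness hypothesis) must be stitched together to recover the full norm $\|p\|_{\mathbb{N}}^2=|\bar p|_e^2+|p|_c^2$ on all of $\mathbb{N}$. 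I would carry out the $\delta$-bookkeeping by fixing $\delta$ depending only on $\gamma_a,\gamma_b,C_a,C_b$ (and the $c$-constants) after the Young's-inequality splittings, thereby guaranteeing the inf-sup constant $\gamma$ is parameter-independent; the symmetry of $a$ and $c$, assumed in the hypotheses, ensures the resulting bilinear form is well-behaved and that the bound transfers to well-posedness by the Babu\v{s}ka framework cited as Theorem \ref{thm:babuska1}'s underlying tool.
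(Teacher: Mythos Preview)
Your proposal is essentially correct and follows the same constructive test-function route as the paper, but you have over-complicated it with an unnecessary third movement based on a misreading of the target norm.

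The paper's proof simply takes the single test pair $(\phi,q)=(u+\theta w,-p)$ with $\theta=\gamma_a\gamma_b/C_a^2$, i.e.\ exactly your movements~1 and~2 combined. A direct computation then gives
\[
L(u,p;\phi,q)\;\geq\;\tfrac{\gamma_a}{2}\|u\|_{\mathbb{M}}^2+\tfrac{\gamma_a\gamma_b^2}{2C_a^2}|\bar p|_e^2+c(p,p),
\]
and since $\|p\|_{\mathbb{N}}^2=|\bar p|_e^2+|p|_c^2=|\bar p|_e^2+c(p,p)$, this is already the full product norm. No third step is needed.

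Your third movement (testing with $(0,-p_0)$ and invoking the definiteness hypothesis $c(q,q)>0$ on $\mathbb{K}\setminus\{0\}$) is superfluous: the norm contains $|p|_c^2$, not $|p_0|_c^2$, and movement~1 already delivers $c(p,p)$ in full. The hypothesis $c(q,q)>0$ for $q\in\mathbb{K}\setminus\{0\}$ plays no role in the inf-sup estimate; its only purpose is to guarantee that $\|\cdot\|_{\mathbb{N}}$ is a genuine norm (if $p\in\mathbb{K}$ then $\bar p=0$, so $\|p\|_{\mathbb{N}}^2=c(p,p)$ must be positive for $p\neq 0$). Consequently your ``delicate point'' about stitching $|\cdot|_e$-control on $\mathbb{K}^\perp$ with $|\cdot|_c$-control on $\mathbb{K}$ dissolves: the norm was designed precisely so that no stitching is required.
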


\begin{proof}
Define
$$L(u,p; \phi, q) = a(u,\phi) + b(\phi, p) + b(u,q)  - c(p,q).$$

To prove well-posedness, we just need to verify the following:
\begin{itemize}
\item the boundedness of $L(\cdot;\cdot)$ under the norms $\|\cdot\|_{\mathbb{M}}$ and $\|\cdot\|_{\mathbb{N}}$.
\item the Babuska inf-sup condition: for any $(u, p) \in {\mathbb M} \times {\mathbb N}$,
\begin{equation} \label{eq:L-inf-sup}
\sup_{(\phi, q) \in {\mathbb M} \times {\mathbb N}} \frac{L(u,p;\phi,q)}{\left(  \|\phi\|_{\mathbb{M}}^2 + \| q \|_{\mathbb{N}}^2 \right)^{1/2}} \ge C(\| {u} \|_{\mathbb{M}}^2 + \| p \|_{\mathbb{N}}^2 )^{1/2}.
\end{equation}
\end{itemize}
Since it is straightforward to verify the boundedness of $L(\cdot ; \cdot)$, we focus on proving the inf-sup condition (\ref{eq:L-inf-sup}).

According to (\ref{eq:infsup_b}), for $\forall p \in {\mathbb N}$, consider its projection $\bar p \in {\mathbb K}^{\perp}$.  There exists $w$ such that
\begin{equation*}
b(w,\bar p) \ge \gamma_b | \bar p|_e^2 \ \text{and} \ \|w \|_{\mathbb{M}} = |\bar p |_{e}.
\end{equation*} 

Let $\phi ={u} + \theta w$, $\theta=\gamma_a\gamma_b/C_a^2$, and $q = -p$.  Then, we have
\begin{align*}
L(u,p; \phi, q) & = a({u},{u} + \theta w) + b({u} + \theta w, p) - b({u}, p) +  c(p,p) \\
		& \geq \gamma_a\|  u \|_{\mathbb{M}}^2 + \theta a( u,w) + \theta b(w,p) +  c(p,p) \\
		& \ge \gamma_a  \| u\|_{\mathbb{M}}^2 -\frac{\gamma_a}{2}\| u\|_{\mathbb{M}}^2- \frac{\theta^2C_a^2}{2\gamma_a } \|w\|_{\mathbb{M}}^2 + \gamma_b\theta |\bar p|_{e}^2 +  c(p,p) \\		
				& \ge \frac{\gamma_a}{2}\| u\|_{\mathbb{M}}^2 +\gamma_b\theta\left(1- \frac{\theta C_a^2}{2\gamma_a \gamma_b}\right) |\bar p|_{e}^2  +  c(p,p) \\		
		& = \frac{ \gamma_a}{2}  \| u\|_{\mathbb{M}}^2 +\frac{\gamma_a\gamma_b^2}{2C_a^2} |\bar p|_{e}^2 +  |p|_c^2. 
\end{align*}
Moreover, we have
\begin{align*}
\| \phi \|_{\mathbb{M}}^2 + |\bar q |_{e}^2  + |q|_c^2 &\leq 2\| u\|_{\mathbb{M}}^2+(2\gamma_a^2\gamma_b^2/C_a^4+1)|\bar p|_{e}^2+|p|_c^2. 
\end{align*}
Therefore, \eqref{eq:L-inf-sup} holds.

\end{proof}
\begin{remark}
It is worth noting that in case $\mathbb{K}=\{0\}$, we have $\bar q=q$ and $\|q\|_{\mathbb{N}}^2=|q|_{e}^2+|q|_c^2$.
\end{remark}

It is also possible to only assume that $a(\cdot,\cdot)$ is elliptic in ${\mathbb Z}$.  Then we need to assume that $c(\cdot,\cdot)$ is bounded under the norm $\|\cdot\|_{\mathbb{N}}$.

\begin{theorem}(\cite{Brezzi.F;Fortin.M1991a})
\label{thm:babuska2}
Assume that $a(\cdot,\cdot)$ and $c(\cdot,\cdot)$ are symmetric and positive semi-definite and that  (\ref{eq:bounded_a}), (\ref{eq:bounded_b}) and (\ref{eq:infsup_b}) hold.
Moreover, assume that
\begin{equation}
\label{eq:Zelliptic_a}
a( u, u)\geq \gamma_a\| u\|_{\mathbb{M}}^2, ~~\forall  u\in {\mathbb Z},
\end{equation}
\begin{equation}
\label{eq:Kelliptic_c}
c(q,q)\geq \gamma_c\|q\|_{\mathbb{N}}^2, ~~\forall q\in {\mathbb K},
\end{equation}
\begin{equation}
\label{eq:bounede_c}
c(p,q)\leq C_c\|p\|_{\mathbb{N}}\|q\|_{\mathbb{N}}, ~~\forall p,q\in {\mathbb N}.
\end{equation}
Assume that the constants $C_a$, $C_b$, $C_c$, $\gamma_a$, $\gamma_b$ and $\gamma_c$ are independent of the parameters. 
Then, Problem \eqref{eq:saddle_abc} is uniformly well-posed with respect to parameters under the norms $\|\cdot\|_{\mathbb{M}}$ and $\|\cdot\|_{\mathbb{N}}$.
\end{theorem}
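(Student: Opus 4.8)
The plan is to follow the Babuska strategy already used for Theorem~\ref{thm:babuska1}: work with the combined bilinear form
\[
L(u,p;\phi,q)=a(u,\phi)+b(\phi,p)+b(u,q)-c(p,q)
\]
on $(\mathbb{M}\times\mathbb{N})\times(\mathbb{M}\times\mathbb{N})$, observe that its boundedness is immediate from \eqref{eq:bounded_a}, \eqref{eq:bounded_b} and \eqref{eq:bounede_c}, and reduce everything to the single Babuska inf-sup inequality \eqref{eq:L-inf-sup}. As before, all the work is in constructing, for a given $(u,p)$, a test pair $(\phi,q)$ whose norm is controlled by that of $(u,p)$ and for which $L(u,p;\phi,q)\gtrsim\|u\|_{\mathbb{M}}^2+\|p\|_{\mathbb{N}}^2$.

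The essential difference from Theorem~\ref{thm:babuska1} is that $a(\cdot,\cdot)$ is now coercive only on $\mathbb{Z}$ and $c(\cdot,\cdot)$ only on $\mathbb{K}$. This is structurally unavoidable: for $u_0\in\mathbb{Z}$ and $p_0\in\mathbb{K}$ one has $b(u_0,\cdot)=0$ and $b(\cdot,p_0)=0$, so these kernel components are invisible to $b$ and can be captured only through $a$ and $c$, which is exactly what \eqref{eq:Zelliptic_a} and \eqref{eq:Kelliptic_c} supply. Accordingly I would split $u=u_0+\bar u$ and $p=p_0+\bar p$ and assemble the test pair from four ingredients: (i) the inf-sup condition \eqref{eq:infsup_b} yields $w\in\mathbb{M}$ with $b(w,\bar p)\gtrsim\|\bar p\|_{\mathbb{N}}^2$ and $\|w\|_{\mathbb{M}}\lesssim\|\bar p\|_{\mathbb{N}}$; (ii) the transposed form of \eqref{eq:infsup_b} (same inf-sup constant on $\mathbb{Z}^{\perp}$ and $\mathbb{K}^{\perp}$ by the closed-range theorem) yields $z\in\mathbb{N}$ with $b(\bar u,z)\gtrsim\|\bar u\|_{\mathbb{M}}^2$ and $\|z\|_{\mathbb{N}}\lesssim\|\bar u\|_{\mathbb{M}}$; (iii) coercivity of $a$ on $\mathbb{Z}$ applied to $u_0$; and (iv) coercivity of $c$ on $\mathbb{K}$ applied to $p_0$. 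Concretely I would test with $\phi=u+\epsilon_1 w+\epsilon_0 u_0$ and $q=-p+\epsilon_2 z-\epsilon_0' p_0$. Because $b(u_0,\cdot)=0$ and $b(\cdot,p_0)=0$, the added kernel terms feed only into the $a$- and $c$-parts, the functions $w,z$ feed only into the $b$-parts, and the two copies of $b(u,p)$ cancel.

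Expanding $L$ then leaves the nonnegative quantities $a(u,u)$ and $c(p,p)$, the inf-sup gains $\epsilon_1 b(w,\bar p)$ and $\epsilon_2 b(\bar u,z)$, the coercivity gains $\epsilon_0 a(u,u_0)$ and $\epsilon_0' c(p,p_0)$, and the cross terms $\epsilon_1 a(u,w)$ and $-\epsilon_2 c(p,z)$. The cross terms are where the difficulty concentrates, and the point I expect to matter most is that they must be estimated with the Cauchy-Schwarz inequality for the semidefinite forms $a$ and $c$ themselves, not the generic norm Cauchy-Schwarz: writing $\epsilon_1|a(u,w)|\le\frac12 a(u,u)+\frac{\epsilon_1^2}{2}a(w,w)$ absorbs the dangerous factor into $\frac12 a(u,u)\ge0$ and leaves only an $O(\epsilon_1^2)\|\bar p\|_{\mathbb{N}}^2$ remainder, and symmetrically for $c$. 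This confines each leak to the complementary component and keeps the $\|u_0\|_{\mathbb{M}}^2$ and $\|p_0\|_{\mathbb{N}}^2$ coefficients clean, so that $\epsilon_0 a(u,u_0)$ and $\epsilon_0' c(p,p_0)$ alone deliver them. One then fixes the parameters in order: $\epsilon_1,\epsilon_2$ small so the $\|\bar p\|_{\mathbb{N}}^2$ and $\|\bar u\|_{\mathbb{M}}^2$ coefficients stay positive, and then $\epsilon_0,\epsilon_0'$ smaller still so their feedback into $\|\bar u\|_{\mathbb{M}}^2,\|\bar p\|_{\mathbb{N}}^2$ is dominated, all choices depending only on $C_a,C_c,\gamma_a,\gamma_b,\gamma_c$ and hence parameter-independent. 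The companion bound $\|\phi\|_{\mathbb{M}}^2+\|q\|_{\mathbb{N}}^2\lesssim\|u\|_{\mathbb{M}}^2+\|p\|_{\mathbb{N}}^2$ is routine from $\|w\|_{\mathbb{M}}\lesssim\|\bar p\|_{\mathbb{N}}$ and $\|z\|_{\mathbb{N}}\lesssim\|\bar u\|_{\mathbb{M}}$, and \eqref{eq:L-inf-sup} follows.
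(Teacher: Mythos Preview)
The paper does not supply its own proof of this theorem; it simply states the result with a citation to Brezzi--Fortin. So there is no in-paper argument to compare against, and the relevant question is whether your construction is sound.

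It is. Your extension of the Theorem~\ref{thm:babuska1} argument is the right one: once $a$ is coercive only on $\mathbb{Z}$, the pair $(\phi,q)=(u+\theta w,-p)$ used there no longer controls $\|\bar u\|_{\mathbb{M}}$, and your two additions fix exactly this. The transposed inf-sup element $z$ recovers $\|\bar u\|_{\mathbb{M}}^2$ through $b(\bar u,z)$, and the kernel perturbations $\epsilon_0 u_0,\ \epsilon_0' p_0$ feed the only available coercivity for $\|u_0\|_{\mathbb{M}}^2$ and $\|p_0\|_{\mathbb{N}}^2$. Your insistence on using the Cauchy--Schwarz inequality for the \emph{forms} $a$ and $c$ (rather than for the norms) when handling $a(u,w)$ and $c(p,z)$ is precisely the right move: it dumps those cross terms into the nonnegative pools $\tfrac12 a(u,u)$ and $\tfrac12 c(p,p)$ and leaves only $O(\epsilon_1^2)\|\bar p\|_{\mathbb{N}}^2$ and $O(\epsilon_2^2)\|\bar u\|_{\mathbb{M}}^2$ remainders, which is what makes the staged choice $\epsilon_1,\epsilon_2$ first, then $\epsilon_0,\epsilon_0'$ smaller, close. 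The transposed inf-sup you invoke does hold with the same constant $\gamma_b$ in Hilbert spaces, so no new parameter enters.

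The classical Brezzi--Fortin route the paper cites is organized differently: rather than building a single Babu\v ska test pair, one derives a priori bounds on $(u,p)$ by solving auxiliary subproblems on $\mathbb{Z},\mathbb{Z}^\perp,\mathbb{K},\mathbb{K}^\perp$ and chaining the resulting estimates. The ingredients are identical (inf-sup and its transpose, kernel coercivities, boundedness), but your packaging has the virtue of being a direct continuation of the paper's proof of Theorem~\ref{thm:babuska1} and makes the parameter-independence of the final constant transparent.
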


Theorems \ref{thm:babuska1} and \ref{thm:babuska2} will be used to prove the well-posedness in different cases.  Note that they are sufficient conditions for the problems to be well-posed. For weaker conditions, we refer to \cite{Boffi.D;Brezzi.F;Fortin.M2013a}.

In this paper, we are especially interested in the robustness of preconditioners with respect to varying material and discretization parameters guided by the well-posedness of the linear system. Thus we want to emphasize the dependence on these parameters in inequalities. Therefore, we introduce the following notation: $\lesssim$, $\gtrsim$ and $\ucong$.  Given two quantities $x$ and $y$, $x\lesssim y$ means that there is a constant $C$ independent of these parameters such that $x\leq Cy$. $\gtrsim$ can be similarly defined. $x\ucong y$ if $x\lesssim y$ and $x\gtrsim y$.
\section{Relationship between preconditioning and well-posedness}
\label{sec:precondition}
Given that a variational problem is well-posed, an optimal precondtioner can be developed, in order to speed up Krylov subspace methods, such as Conjugate Gradient Method (CG) and Minimal Residual Method (MINRES). 
In order to illustrate this fact, we first consider the following variational problem:
\begin{description}
\item[]
Find $\bx\in \mathbb{X}$, such that
\begin{equation}
\label{eq:var_L}
\mathbf{L}(\bx,\by)=\langle \mathbf{f},\by\rangle,\quad \forall \by\in \mathbb{X},
\end{equation}
\end{description}
where $\mathbb{X}$ is a given Hilbert space and $\mathbf{f}\in \mathbb{X}'$.

The well-posedness of the variational problem (\ref{eq:var_L}) refers to the existence, uniqueness, and the stability $\|\bx\|_{\mathbb{X}}\lesssim \|\mathbf{f}\|_{\mathbb{X}'}$ of the solution. The necessary and sufficient conditions for (\ref{eq:var_L}) to be well-posed are shown in the following theorem.   We assume the symmetry $\mathbf{L}(\bx,\by)=\mathbf{L}(\by,\bx)$ in the rest of this section.

\begin{theorem}(\cite{Babuska.I1971a})
Problem (\ref{eq:var_L}) is well-posed if and only if the following conditions are satisfied:
\begin{itemize}
\item There exists a constant $C>0$ such that $\mathbf{L}(\bx,\by)\leq C\|\bx\|_{\mathbb{X}}\|\by\|_{\mathbb{X}}$.
\item There exists a constant $\beta>0$ such that
\begin{equation}
\label{eq:infsup_L}
\inf_{\bx\in \mathbb{X}}\sup_{\by\in \mathbb{X}}\frac{\mathbf{L}(\bx,\by)}{\|\bx\|_{\mathbb{X}}\|\by\|_{\mathbb{X}}}=\beta>0.
\end{equation}
\end{itemize}

\end{theorem}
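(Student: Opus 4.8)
The plan is to recast the variational problem (\ref{eq:var_L}) as an operator equation and thereby reduce the claim to a standard invertibility statement for a self-adjoint operator. By the Riesz representation theorem, for each fixed $\bx\in\mathbb{X}$ the functional $\by\mapsto\mathbf{L}(\bx,\by)$ is represented by a unique element $\cA\bx\in\mathbb{X}$, so that $(\cA\bx,\by)_{\mathbb{X}}=\mathbf{L}(\bx,\by)$ for all $\by\in\mathbb{X}$. The boundedness condition in the first bullet is precisely the statement that $\cA:\mathbb{X}\to\mathbb{X}$ is a bounded linear operator with $\|\cA\|\le C$, and the assumed symmetry $\mathbf{L}(\bx,\by)=\mathbf{L}(\by,\bx)$ makes $\cA$ self-adjoint. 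Identifying $\mathbf{f}$ with its own Riesz representative, Problem (\ref{eq:var_L}) is equivalent to the equation $\cA\bx=\mathbf{f}$ in $\mathbb{X}$, and well-posedness (existence, uniqueness, and $\|\bx\|_{\mathbb{X}}\lesssim\|\mathbf{f}\|_{\mathbb{X}'}$) is equivalent to $\cA$ being a bijection of $\mathbb{X}$ onto itself with bounded inverse.

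The second step is the elementary identity
\begin{equation*}
\sup_{\by\in\mathbb{X}}\frac{\mathbf{L}(\bx,\by)}{\|\by\|_{\mathbb{X}}}=\sup_{\by\in\mathbb{X}}\frac{(\cA\bx,\by)_{\mathbb{X}}}{\|\by\|_{\mathbb{X}}}=\|\cA\bx\|_{\mathbb{X}},
\end{equation*}
which shows that the inf-sup condition (\ref{eq:infsup_L}) is nothing other than the lower bound $\|\cA\bx\|_{\mathbb{X}}\ge\beta\|\bx\|_{\mathbb{X}}$ for all $\bx\in\mathbb{X}$; that is, $\cA$ is bounded below by $\beta$. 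With this dictionary in place, the theorem reduces to the assertion that a bounded self-adjoint operator on a Hilbert space is boundedly invertible if and only if it is bounded below.

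For sufficiency I would argue that being bounded below immediately yields injectivity and, through a Cauchy-sequence argument on $\{\cA\bx_n\}$, that the range $R(\cA)$ is closed. The decisive point — and the one place where symmetry is genuinely used — is surjectivity: since $\cA$ is self-adjoint, the orthogonal complement of its range equals $N(\cA^{\ast})=N(\cA)=\{0\}$, so $R(\cA)$ is dense; being also closed, $R(\cA)=\mathbb{X}$. Hence $\cA$ is bijective with $\|\cA^{-1}\|\le 1/\beta$, which delivers existence, uniqueness, and the stability estimate $\|\bx\|_{\mathbb{X}}\le\beta^{-1}\|\mathbf{f}\|_{\mathbb{X}'}$. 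Conversely, if the problem is well-posed then $\cA$ is invertible with bounded inverse; the first bullet is the continuity already built into the definition of the problem, and the bound $\|\cA\bx\|_{\mathbb{X}}\ge\|\cA^{-1}\|^{-1}\|\bx\|_{\mathbb{X}}$ furnishes the inf-sup constant $\beta=\|\cA^{-1}\|^{-1}$.

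I expect the main obstacle to be the surjectivity argument. A single inf-sup condition taken over $\by$ only controls the form in one argument, and in the general non-symmetric Banach--Ne\v{c}as setting one needs a separate non-degeneracy condition to exclude a nontrivial cokernel. Here the whole simplification rests on exploiting self-adjointness to identify $R(\cA)^{\perp}$ with $N(\cA)$, so I would state the closed-range step carefully, since it is the implication ``bounded below $\Rightarrow$ closed range'' that permits dense range to be promoted to the full space.
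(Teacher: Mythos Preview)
Your argument is correct. You correctly exploit the standing symmetry assumption on $\mathbf{L}$ (stated just before the theorem) to reduce the Babu\v{s}ka--Ne\v{c}as conditions to a single inf-sup, and your closed-range plus $R(\cA)^{\perp}=N(\cA^{\ast})=N(\cA)=\{0\}$ step is exactly the right way to get surjectivity in the self-adjoint Hilbert setting.

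There is nothing to compare against: the paper does not prove this theorem but merely quotes it as a classical result from \cite{Babuska.I1971a}. Your write-up therefore supplies a proof where the paper has none. One small remark worth making explicit in a final version: the ``only if'' direction as you state it presupposes that the bilinear form is bounded to begin with (so that $\cA$ exists as a bounded operator), which is how the paper sets up the problem; strictly speaking, well-posedness by itself does not manufacture the continuity constant $C$, so the first bullet is really part of the standing hypotheses rather than a consequence. This is a matter of phrasing, not a gap in the mathematics.
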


%

Consider the operator form of (\ref{eq:var_L}):
$$\mathcal{L} \mathbf{x}=\mathbf{f}\in \mathbb{X}'.$$
Define operator $\mathcal{P}$ such that
\begin{equation}
\label{eq:riesz}
(\mathcal{P}\mathbf{f},\mathbf{y})_{\mathbb{X}}=\langle \mathbf{f},\by\rangle,\quad  \mathbf{f}\in \mathbb{X}', \by\in \mathbb{X}.
\end{equation}
Assuming the well-posedness, then the following inequalities hold
$$\|\mathcal{P}\mathcal{L}\|_{\mathbf{L}(\mathbb{X},\mathbb{X})}=\sup_{\bx,\by}\frac{(\mathcal{PL}\bx,\by)_{\mathbb X}}{\|\bx\|_{\mathbb{X}}\|\by\|_{\mathbb{X}}}= \sup_{\bx,\by}\frac{\langle\mathcal{L}\bx,\by\rangle}{\|\bx\|_{\mathbb{X}}\|\by\|_{\mathbb{X}}}\leq C,$$
$$\|(\mathcal{PL})^{-1}\|^{-1}_{\mathbf{L}(\mathbb{X},\mathbb{X})}=\inf_{\bx}\sup_\by\frac{(\mathcal{PL}\bx,\by)_{\mathbb X}}{\|\bx\|_{\mathbb{X}}\|\by\|_{\mathbb{X}}}=\inf_{\bx}\sup_\by\frac{\langle\mathcal{L}\bx,\by \rangle}{\|\bx\|_{\mathbb{X}}\|\by\|_{\mathbb{X}}}\geq \beta.$$
Therefore, the condition number of the precondtioned system is proved to be bounded
$$\kappa(\mathcal{PL}):=\|\mathcal{PL}\|_{\mathbf{L}(\mathbb{X},\mathbb{X})}\|(\mathcal{PL})^{-1}\|_{\mathbf{L}(\mathbb{X},\mathbb{X})}\leq C/\beta.$$
This type of preconditioners is frequently used in the literature and is characterized as ``mapping property'' in a recent review paper \cite{Mardal.K;Winther.R2011a}.

Let $\{\boldsymbol{\phi}_i\}$ be a set of given basis of $\mathbb{X}$ and $\{\boldsymbol{\phi}_i'\}$ be a set of given basis of $\mathbb{X'}$. Consider the matrix representation of $\mathcal{P}$ and $\mathcal{L}$ :
$$
\begin{aligned}
\mathcal{P}(\boldsymbol{\phi}_1',\cdots,\boldsymbol{\phi}_n')&=(\boldsymbol{\phi}_1,\cdots,\boldsymbol{\phi}_n)P,~~~ \mathcal{L}(\boldsymbol{\phi}_1,\cdots,\boldsymbol{\phi}_n)=(\boldsymbol{\phi}_1',\cdots,\boldsymbol{\phi}_n')L
\end{aligned}
$$
and the vector representation of $\bx$.
$$
 \bx=(\boldsymbol{\phi}_1,\cdots,\boldsymbol{\phi}_n)x.
$$
Assume $L$ is symmetric and $P$ is SPD.  Denote the mass matrix of $\mathbb{X}$ by $M$, i.e., $M_{ij}=(\boldsymbol{\phi}_i,\boldsymbol{\phi}_j)_{\mathbb{X}}$, $\forall i,j$. In fact, $P=M^{-1}$. Then
$$
\begin{aligned}
\|\mathcal{P}\mathcal{L}\|_{\mathbf{L}(\mathbb{X},\mathbb{X})}=\sup_{\bx,\by}\frac{(\mathcal{PL}\bx,\by)_{\mathbb X}}{\|\bx\|_{\mathbb{X}}\|\by\|_{\mathbb{X}}}= \sup_{x,y}\frac{x^T(PL)^TM y}{(x^TM x)^{1/2} (y^TM y)^{1/2}}
=\max_{\lambda\in \sigma(PL)}|\lambda|.\\
\end{aligned}
$$
Similarly,
$$\|(\mathcal{PL})^{-1}\|^{-1}_{\mathbf{L}(\mathbb{X},\mathbb{X})}=\min_{\lambda\in \sigma(PL)}|\lambda|.$$
Therefore, $\kappa(\mathcal{PL})=\kappa(PL)=\frac{\max\limits_{\lambda\in \sigma(PL)}|\lambda|}{\min\limits_{\lambda\in \sigma(PL)}|\lambda|}$.

A more general approach is via norm equivalence matrices \cite{Loghin.D;Wathen.A2004a}.  Given an SPD matrix $H$, $H$ inner product  and $H$ norm can be defined correspondingly:
$$(x ,x)_H:=(H x, x),\quad \|x\|^2_H:=(x,x)_H.$$

 Nonsingular matrices $A$ and $B$ are {\it H-norm equivalent}, denoted by $A\sim_{H}B$, if there are constants $\gamma$ and $\Gamma$ independent of the size of the matrices such that
$$\gamma\|B x\|_H\leq \|A x\|_H\leq \Gamma\|B x\|_H.$$  

If $A\sim_H B$ and $AB^{-1}$ is symmetric with respect to $(\cdot,\cdot)_H$, then MINRES preconditioned by $B^{-1}$ has the following convergence estimate \cite{Loghin.D;Wathen.A2004a}:
$$
\frac{\|r^k\|_H}{\|r^0\|_H}\leq 2\left(\frac{\Gamma-\gamma}{\Gamma+\gamma}\right)^{k/2}.
$$
Consider the preconditioner $P$ defined as the matrix representation of $\cP$ in (\ref{eq:riesz}). It is easy to see that $P^{-1}\sim_{M^{-1}} L$.  Note that $P=M^{-1}$.

This can help in the design of preconditioners for CG and MINRES.  Preconditioning GMRES differs in that it usually depends on the field of value analysis \cite{Loghin.D;Wathen.A2004a}. 

 In the rest of the paper, we will use Theorem \ref{thm:babuska1} and Theorem \ref{thm:babuska2} to prove the well-posedness of the different formulations of the Biot model under different choices of $\mathbb{X}$. Then, based on the well-posedness, we show the corresponding optimal block preconditioners.

\section{A two-field formulation}
\label{sec:2field}
The preconditioning for the two-field system (\ref{eq:2by2}) has been studied extensively in the literature  \cite{Phoon.K;Toh.K;Chan.S;Lee.F2002a,Toh.K;Phoon.K;Chan.S2004a,Haga.J;Osnes.H;Langtangen.H2012a,Haga.J;Osnes.H;Langtangen.H2011a}, where the Schur complement approach is usually used to develop preconditioners.  In this paper, similar to \cite{adler2017robust}, we briefly formulate a preconditioner based on the well-posedness of the linear systems for the two-field Biot model.

We first study the well-posedness of (\ref{eq:2by2}), beginning by changing the variable $\tilde p=-\alpha p$ in order to symmetrize (\ref{eq:2by2}). With an abuse of notation, we still use the notation $p$ for pressure after the change of variable.  Next, we introduce the function space for the displacement and the pressure.  Due to the boundary conditions (\ref{eq:bdc_u}), we consider
$$\mathbb{U}\subset H^1_D(\Omega):=\{\mathbf{u}\in (H^1(\Omega))^n| \mathbf{u}=0, \mbox{ on }\Gamma_{D,u}\}$$
 for the displacement and $$\mathbb{Q}_c\subset H^1_P(\Omega):=\{p\in H^1(\Omega)| p=0, \mbox{ on }\Gamma_{N,v}\}$$ for the pressure. Here, we use the subscript ``c'' to suggest the continuity of the functions in $\mathbb{Q}_c$. We assume $|\Gamma_{D,u}|>0$ in the rest of this paper so that the elasticity operator is nonsingular on $\mathbb{U}$.  We also assume that $|\Gamma_{N,u}|>0$ such that the divergence operator is surjective on the pressure space. 

Then, we define the following bilinear forms:
\begin{equation*}
\begin{aligned}
\mbox{for } \mathbf{u}, \boldsymbol{\phi}\in \mathbb{U},\quad &a^I(\mathbf{u},\boldsymbol{\phi})= (2\mu\epsilon(\mathbf{u}),\epsilon(\boldsymbol{\phi}))+(\lambda\nabla\cdot \mathbf{u},\nabla\cdot \boldsymbol{\phi}),\\
\mbox{for } \mathbf{u} \in {\mathbb U}, ~~p\in \mathbb{Q}_c,\quad &b^{I}(\mathbf{u},p)=(\nabla\cdot \mathbf{u},p),\\
\mbox{for } p,q\in {\mathbb Q}_c,\quad &d^I(p,q) = (\kappa^{-1}\nabla p,\nabla q) + (\xi p,q),\\
\end{aligned}
\end{equation*}
where $\kappa = \alpha^2/(\Delta t k)$ and $\xi = S/\alpha^2$.

Now, we introduce the notation for the kernel spaces:
$${\mathbb Z^I}=\{\mathbf{u}\in{\mathbb U}| b^I(\mathbf{u},q)=0, \forall q\in {\mathbb Q}_c\},~~~{\mathbb K^I} = \{p\in \mathbb{Q}_c| b^I(\boldsymbol{\phi},p)=0,\forall \boldsymbol{\phi}\in\mathbb{U}\}.$$

The variational formulation of (\ref{eq:2by2}) is as follows:

Find $(\mathbf{u},p)\in {\mathbb U}\times{\mathbb Q}_c$ such that $\forall (\boldsymbol{\phi},q)\in {\mathbb U}\times {\mathbb Q}_c$, the following equations hold

\begin{equation}
\label{eq:saddle_2by2}
\left\{
\begin{aligned}
a^I(\mathbf{u},\boldsymbol{\phi})&+ b^I(\boldsymbol{\phi},p)&=& (f,\boldsymbol{\phi}),\\
b^I(\mathbf{u},q)&-d^I(p,q)&=&(g,q).\\
\end{aligned}
\right.
\end{equation}

We define the norms as follows:

\begin{equation}
\label{eq:norm_up}
\begin{aligned}
\|\mathbf{u}\|_{\mathbb{U}}^2 =& a^I(\mathbf{u},\mathbf{u}),~~~\|q\|_{\mathbb{Q}_c}^2=\beta^{-1}\|q\|^2_0+d^I(q,q),
\end{aligned}
\end{equation}
where $\beta=\max\left\{{\mu},{\lambda}\right\}$.

This variational formulation (\ref{eq:saddle_2by2}) is proved to be well-posed under the norms $\|\cdot\|_{\mathbb{U}}$ and $\|\cdot\|_{\mathbb{Q}_c}$
provided that the following inf-sup condition holds

\begin{equation}
\label{eq:infsup_up}
\forall p\in (\mathbb{K}^I)^{\perp},\quad \sup_{\mathbf{u}\in\mathbb{U}}\frac{b^I(\mathbf{u},p)}{\|\mathbf{u}\|_1}\gtrsim \|p\|_0.
\end{equation}
It is well known that (\ref{eq:infsup_up}) holds for ${\mathbb U}=H_D^1(\Omega)$, ${\mathbb Q}=L^2(\Omega)$ and $\mathbb{U}=(H^1_0(\Omega))^n$, $\mathbb{Q}=L^2_0(\Omega)$  on a bounded domain $\Omega$ with Lipschitz boundary \cite{Bramble.J;Lazarov.R;Pasciak.J2001a,Bramble.J2003a}.  Moreover, (\ref{eq:infsup_up}) holds for stable Stokes FEM pairs \cite{Boffi.D;Brezzi.F;Fortin.M2013a}.

\begin{theorem}
Assume that the inf-sup conditions (\ref{eq:infsup_up}) holds and $\beta=\max\left\{{\mu},{\lambda}\right\}$. The system (\ref{eq:saddle_2by2}) is uniformly well-posed with respect to parameters under the norms $\|\cdot\|_{\mathbb{U}}$ and $\|\cdot\|_{\mathbb{Q}_c}$defined in  (\ref{eq:norm_up}).
\end{theorem}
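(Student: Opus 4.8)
The plan is to cast \eqref{eq:saddle_2by2} as an instance of the abstract problem \eqref{eq:saddle_abc} with $a=a^I$, $b=b^I$, $c=d^I$, $\mathbb{M}=\mathbb{U}$, $\mathbb{N}=\mathbb{Q}_c$, and then invoke Theorem \ref{thm:babuska1}. The decisive choice is the parameter-scaled semi-norm $|p|_e:=\beta^{-1/2}\|p\|_0$, which is in fact a genuine norm on $\mathbb{Q}_c$, so the requirement $|p|_e\neq 0$ for $p\notin\mathbb{K}^I$ is automatic. Since $|\Gamma_{N,u}|>0$ makes the divergence surjective onto the pressure space, we have $\mathbb{K}^I=\{0\}$; hence $\bar q=q$, and (by the Remark following Theorem \ref{thm:babuska1}) the norm produced by the theorem, $\|q\|_{\mathbb{N}}^2=|\bar q|_e^2+d^I(q,q)=\beta^{-1}\|q\|_0^2+d^I(q,q)$, coincides exactly with the target norm $\|q\|_{\mathbb{Q}_c}^2$ of \eqref{eq:norm_up}. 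Thus proving well-posedness in these norms reduces to checking the hypotheses of Theorem \ref{thm:babuska1} with parameter-independent constants.

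The routine verifications I would dispatch first. Because $\|u\|_{\mathbb{U}}^2$ is \emph{defined} to be $a^I(u,u)$, both continuity and coercivity of $a^I$ hold with $C_a=\gamma_a=1$ by Cauchy--Schwarz, once Korn's inequality together with $|\Gamma_{D,u}|>0$ is used to guarantee that $\|\cdot\|_{\mathbb{U}}$ is indeed a norm on $\mathbb{U}$. The form $d^I$ is symmetric and, since $d^I(q,q)=\kappa^{-1}\|\nabla q\|_0^2+\xi\|q\|_0^2>0$ for $q\neq 0$, positive definite, so the condition $c(q,q)>0$ on $\mathbb{K}^I\setminus\{0\}$ holds (vacuously, as $\mathbb{K}^I=\{0\}$). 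For continuity of $b^I$ I would estimate $b^I(u,p)\le\|\nabla\cdot u\|_0\,\|p\|_0$ and control $\beta\|\nabla\cdot u\|_0^2\lesssim\|u\|_{\mathbb{U}}^2$ via the pointwise bound $(\nabla\cdot u)^2\le n\,|\epsilon(u)|^2$: when $\beta=\lambda$ the factor $\lambda\|\nabla\cdot u\|_0^2$ is already a summand of $\|u\|_{\mathbb{U}}^2$, and when $\beta=\mu$ we get $\mu\|\nabla\cdot u\|_0^2\le \tfrac{n}{2}(2\mu\|\epsilon(u)\|_0^2)\le\tfrac n2\|u\|_{\mathbb{U}}^2$; either way $C_b$ depends only on $n$. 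This is the first place the choice $\beta=\max\{\mu,\lambda\}$ is essential.

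The main obstacle will be the parameter-robust inf-sup \eqref{eq:infsup_b}, namely $\sup_{u}b^I(u,q)/\|u\|_{\mathbb{U}}\gtrsim|q|_e=\beta^{-1/2}\|q\|_0$ with a constant independent of $\mu,\lambda,\kappa,\xi,\Delta t$. I would derive it from the classical Stokes inf-sup \eqref{eq:infsup_up}, which is stated in the $H^1$-norm, by transferring to the energy norm through the elementary upper bound
\begin{equation*}
\|u\|_{\mathbb{U}}^2=2\mu\|\epsilon(u)\|_0^2+\lambda\|\nabla\cdot u\|_0^2\le(2+n)\beta\|u\|_1^2 .
\end{equation*}
Dividing \eqref{eq:infsup_up} by $\sqrt{(2+n)\beta}$ converts the right-hand side $\|q\|_0$ into $\sqrt{(2+n)\beta}\cdot|q|_e$, and the two $\sqrt{\beta}$ factors cancel, leaving $\gamma_b\ucong c_0/\sqrt{2+n}$. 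The delicate point is precisely this cancellation: the single scaling $\beta=\max\{\mu,\lambda\}$ must simultaneously render the continuity of $b^I$ and its inf-sup uniform in the parameters — that balancing is the whole content of parameter-robust well-posedness. Once \eqref{eq:bounded_a}--\eqref{eq:infsup_b} are in hand with parameter-independent constants, Theorem \ref{thm:babuska1} delivers uniform well-posedness of \eqref{eq:saddle_2by2} under $\|\cdot\|_{\mathbb{U}}$ and $\|\cdot\|_{\mathbb{Q}_c}$.
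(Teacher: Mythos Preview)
Your proposal is correct and follows essentially the same approach as the paper: you invoke Theorem \ref{thm:babuska1} with $|p|_e=\beta^{-1/2}\|p\|_0$, note that $\mathbb{K}^I=\{0\}$ so the resulting norm matches \eqref{eq:norm_up}, and verify the hypotheses by transferring the $H^1$-scaled inf-sup \eqref{eq:infsup_up} to the energy norm via $\|u\|_{\mathbb{U}}\lesssim\beta^{1/2}\|u\|_1$. The paper's proof is just a terse version of the same argument; your explicit bookkeeping of the constants and the role of $\beta=\max\{\mu,\lambda\}$ in balancing continuity and inf-sup is a welcome elaboration.
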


\begin{proof}
To prove the well-posedness, we just need to verify the assumptions of Theorem \ref{thm:babuska1}. 

As we assume that $|\Gamma_{N,u}|>0$, we know that $\mathbb{K}^{I}=\{0\}$ and then (\ref{eq:Kelliptic_c}) is trivial.
By definition, (\ref{eq:bounded_a}), (\ref{eq:bounded_b}), and (\ref{eq:elliptic_a}) are straightforward to verify.

Based on (\ref{eq:infsup_up}), the following inf-sup condition is implied
\begin{equation}
\label{}
\forall p\in (\mathbb{K}^I)^{\perp},\quad \sup_{\mathbf{u}\in \mathbb{U}}\frac{b^I(\mathbf{u},p)}{\|\mathbf{u}\|_{\mathbb{U}}}\gtrsim \|p\|_{\mathbb{Q}}.
\end{equation}
Then (\ref{eq:infsup_b}) is verifed.

Therefore,  the proof is finished by applying Theorem \ref{thm:babuska1}.

\end{proof}

With the well-posedness  of (\ref{eq:saddle_2by2}) proved,  an optimal preconditioner can be formulated. 
We first introduce some matrix notation. Given finite element basis functions $\{\mathbf{u}_i\}$ and $\{p_i\}$ for $\mathbb{U}$ and $\mathbb{Q}$, respectively, define the following stiffness matrices:  $(A_u)_{ij}:=a^I(\mathbf{u}_i,\mathbf{u}_j)$, $(B_u)_{ij}=b^I(\mathbf{u}_i,p_j)$, $(A_p)_{ij}=d^I(p_i,p_j)$ and $(M_p)_{ij}=(p_i,p_j)$.

The matrix form of the system and preconditioner are

$$
S^{II}=\left(
\begin{array}{cc}
A_u & B_u^T\\
B_u&-A_p \\
\end{array}
\right)
~\mbox{ and }~
P^{II}=
\left(
\begin{array}{cc}
A_u& \\
& \beta^{-1}M_p+ A_p\\
\end{array}
\right)^{-1},
$$
respectively.

\begin{remark}
In case $|\Gamma_{N,u}|=0$, the kernel space $\mathbb{K}^I$ contains constant functions.  We can similarly prove the well-posedness, but the norm $\|q\|_{\mathbb{N}}$ has a term $|\bar q|_{e}$, which results in a dense matrix in the preconditioner.  We refer to \cite{Lee.J;Mardal.K;Winther.R2015a} for constructing preconditioners related to $|\bar q|_e$. 


%
\end{remark}

In the literature, the preconditioners for two-field formulation are mostly based on Schur complement approaches.  The exact Schur complement preconditioner of $S^{II}$, i.e.,
$$
\left(
\begin{array}{cc}
A_u & \\
&A_p+B_uA_u^{-1}B_u^T \\
\end{array}
\right),
$$
is known to be an optimal preconditioner \cite{Elman.H;Silvester.D;Wathen.A2005a}, although $B_uA_u^{-1}B_u^T$ is dense and cannot be obtained. Practical approximations of $B_uA_u^{-1}B_u^T$, such as
$$B_u\mbox{diag}(A_u)^{-1}B_u^T~\mbox{ and }~ \mbox{diag}(B_u\mbox{diag}(A_u)^{-1}B_u^T),$$
have also been investigated \cite{Phoon.K;Toh.K;Chan.S;Lee.F2002a,Toh.K;Phoon.K;Chan.S2004a,Haga.J;Osnes.H;Langtangen.H2012a,Haga.J;Osnes.H;Langtangen.H2011a}.

The two-field formulation is usually considered computationally efficient, as it involves the fewest variables and, therefore, has smaller linear systems to solve than the three-field formulation (\ref{eq:biot_3field}).  However, the two-field formulation (with continuous pressure elements) exhibits oscillations in the pressure field, and more expanded systems such as the three-field formulation, are shown to be more stable \cite{Ferronato.M;Castelletto.N;Gambolati.G2010a,Haga.J;Osnes.H;Langtangen.H2012b}.  Motivated by this fact, we study a three-field formulation \cite{Haga.J;Osnes.H;Langtangen.H2012b} in the next section.

\section{A three-field formulation}\label{sec:3field}
In this section, we will show the well-posedness of the three field formulation (\ref{eq:biot_3field}), briefly formulate the diagonal block robust preconditioners of \cite{hong2018parameter,adler2019robust} as special cases, and propose some new preconditioners for the three field formulation guided by the well-posedness.
\subsection{The three-field formulation}
We can write (\ref{eq:biot_3field}) as a symmetric problem by rescaling. Introduce $$\tilde{\mathbf{v}}=\frac{\Delta t}{\alpha}\mathbf{v},\quad \tilde p=-\alpha p.$$
The three-field system (\ref{eq:biot_3field}) can be rewritten as
\begin{equation}
\label{eq:3by3}
\left\{
\begin{aligned}
-\nabla\cdot(2 \mu \epsilon(\mathbf{u}) + \lambda (\nabla \cdot  \mathbf{u})  \bI)-\nabla \tilde p&=f,\\
\kappa\tilde{\mathbf{v}}-\nabla \tilde p&=r,\\
\nabla\cdot \mathbf{u}+\nabla\cdot \tilde{\mathbf{v}}-\xi \tilde p&=g.\\
\end{aligned}
\right.
\end{equation}

%
With an abuse of notation, we still use $\mathbf{v}$ and $p$ to denote the scaled velocity $\tilde{\mathbf{v}}$, the scaled pressure $\tilde{p}$, respectively.
Then, we introduce the function spaces:
$$\mathbb{V}\subset H_{D}(\mbox{div},\Omega):=\{\mathbf{v}\in H(\mbox{div},\Omega)|\mathbf{v}\cdot \mathbf {n}= 0, \mbox{ on } \Gamma_{D,v}\},$$
$$\mathbb{W}={\mathbb U}\times  {\mathbb V},~~~\mathbb{Q}\subset L^2(\Omega),$$
and bilinear forms
$$\mbox{for } (\mathbf{u},\mathbf{v}),(\boldsymbol{\phi},\boldsymbol{\psi})\in \mathbb{W},\quad a^{II}(\mathbf{u},\mathbf{v}; \boldsymbol{\phi},\boldsymbol{\psi})= a^I(\mathbf{u},\boldsymbol{\phi})+ (\kappa\mathbf{v},\boldsymbol{\psi}),$$
$$\mbox{for } (\mathbf{u},\mathbf{v}) \in {\mathbb W}, ~~p\in \mathbb{Q},\quad b^{II}(\mathbf{u},\mathbf{v};p)=b^I(\mathbf{u},p)+ (\nabla\cdot\mathbf{v},p),$$
$$\mbox{for } p,q\in {\mathbb Q}, ~~c^I(p,q)=(\xi p,q),\quad \xi> 0.$$

We define the corresponding kernel spaces related to $b^{II}(\cdot;\cdot)$
$${\mathbb Z}^{II}=\{(\mathbf{u},\mathbf{v})\in{\mathbb W}| b^{II}(\mathbf{u},\mathbf{v};q)=0, \forall q\in {\mathbb Q}\},$$
$${\mathbb K^{II}} = \{p\in \mathbb{Q}| b^{II}(\boldsymbol{\phi},\boldsymbol{\psi};p)=0,\forall (\boldsymbol{\phi},\boldsymbol{\psi})\in\mathbb{W}\}.$$
Note that due to the assumption $|\Gamma_{N,u}|>0$, we have $\mathbb{K}^{II}=\{0\}$.

Then, the weak formulation is as follows:

Find $(\mathbf{u},\mathbf{v})\in {\mathbb W}$ and $p\in{\mathbb Q}$ such that $\forall (\boldsymbol{\phi},\boldsymbol{\psi})\in {\mathbb W}$ and $q\in {\mathbb Q}$, the following equations hold

\begin{equation}
\label{eq:saddle_3}
\left\{
\begin{aligned}
a^{II}(\mathbf{u},\mathbf{v};\boldsymbol{\phi},\boldsymbol{\psi})&+ b^{II}(\boldsymbol{\phi},\boldsymbol{\psi};p)&=& (f,\boldsymbol{\phi})+(r, \boldsymbol{\psi}),\\
b^{II}(\mathbf{u},\mathbf{v};q)&-c^I(p,q)&=&(g,q).\\
\end{aligned}
\right.
\end{equation}
The additional term $c^I(p,q)$ corresponds to different versions of the Biot models \cite{Axelsson.O;Blaheta.R;Byczanski.P2012a}. 

The well-posedness of this saddle point problem can be proved with different choices of norms for ${\mathbb W}$ and $\mathbb{Q}$. We discuss some of these options in the rest of this section.

\subsection{Augmented Lagrangian preconditioners}
The stability of the three-field system (\ref{eq:3by3}) is closely related to the stability of the pair $\mathbf{u}$-$p$ and $\bv$-$p$. In particular, it is considered stable if $\mathbf{u}$-$p$ satisfies  (\ref{eq:infsup_up})
and $\mathbf{v}$-$p$ satisfies
\begin{equation}
\label{eq:infsup_vp}
\forall p\in (\mathbb{K}_v)^{\perp},\quad \quad \sup_{\mathbf{v}\in\mathbb{V}}\frac{(\nabla\cdot\mathbf{v},p)}{\|\mathbf{v}\|_{H({\rm div})}}\gtrsim \|p\|_0,
\end{equation}
where 
$$\mathbb{K}_v:= \{p\in \mathbb{Q} |(\nabla \cdot\mathbf{v},p)=0,   \forall \mathbf{v}\in \mathbb{V} \}.$$
  (\ref{eq:infsup_vp}) holds for $\mathbb{V}=H_D(\mbox{div},\Omega)$ and $\mathbb{Q}=L^2(\Omega)$ and, in discrete cases, there are many stable pairs, such as Raviart-Thomas elements \cite{Raviart.P;Thomas.J1977a} for $\mathbb{V}$ and piecewise polynomials for $\mathbb{Q}$.

The augmented Lagrangian (AL) method \cite{Benzi.M;Olshanskii.M2006a,Xu.J;Yang.K2015a} incorporates the constraint into the norm.  The constraint here is $$\nabla\cdot(\mathbf{u}+\mathbf{v})=0.$$
 Therefore, it is natural to consider the following norms for the AL method.

Let $P_Q$ be the $L^2$ projection from $L^2(\Omega)$ to $\mathbb Q$. We define the norms for spaces ${\mathbb W}$ and $\mathbb{Q}$ as follows:
\begin{equation}
\label{eq:norm_al}
\begin{aligned}
\|\mathbf{v}\|_{\mathbb{V}}^2=&(\kappa\mathbf{v},\mathbf{v}),\\
\|(\mathbf{u},\mathbf{v})\|_{\mathbb{W}}^2=& \|\mathbf{u}\|_{\mathbb{U}}^2+\|\mathbf{v}\|_{\mathbb{V}}^2+\beta\|P_Q\nabla\cdot(\mathbf{u}+\mathbf{v})\|_0^2,\\
\|q\|_{\mathbb{Q}}^2=&(\beta^{-1} q,q),\\
\end{aligned}
\end{equation}
where $\xi$ is the coefficient in bilinear form $c^I(\cdot,\cdot)$, and $\beta$ is an undetermined parameter. 

To prove the well-posedness of (\ref{eq:saddle_3}), we just need to verify the assumptions of Theorem \ref{thm:babuska1}.

Given (\ref{eq:infsup_up}), we have $\forall q\in ({\mathbb K}^{II})^{\perp},$
\begin{equation}
\label{eq:InfSup_up}
\sup_{\mathbf{u},\mathbf{v}}\frac{b^{II}(\mathbf{u},\mathbf{v};q)}{||(\mathbf{u},\mathbf{v})||_{\mathbb{W}}} \geq \sup_{\mathbf{u}}\frac{(\nabla \cdot \mathbf{u},q)}{(||\mathbf{u}||^2_{\mathbb{U}}+\beta\|P_Q\nabla\cdot \mathbf{u}\|_0^2)^{1/2}}\gtrsim \max\{\mu,\lambda,\beta\}^{-1/2}\|q\|_0.
\end{equation}
For the case in which $\beta\geq \max\{\mu,\lambda\}$, the right-hand side of (\ref{eq:InfSup_up}) is equal to $\|q\|_{\mathbb{Q}}$.

Given (\ref{eq:infsup_vp}), we can prove another inequality: $\forall q\in ({\mathbb K}^{II})^{\perp}$
\begin{equation}
\label{eq:InfSup_vp}
 \sup_{\mathbf{u},\mathbf{v}}\frac{b^{II}(\mathbf{u},\mathbf{v};q)}{||(\mathbf{u},\mathbf{v})||_{\mathbb{W}}} \geq \sup_{\mathbf{v}}\frac{(\nabla \cdot\mathbf{v},q)}{(||\mathbf{v}||^2_{\mathbb{V}}+\beta\|P_Q\nabla\cdot\mathbf{v}\|_0^2)^{1/2}}\gtrsim \max\{\kappa,\beta\}^{-1/2}\|q\|_0.
\end{equation}
Similarly, if we further assume that $\beta\geq \kappa $, the right-hand side of (\ref{eq:InfSup_vp}) is equal to $\|q\|_{\mathbb{Q}}$.  Note that this approach is used in \cite{Lipnikov.K2002a}, where the displacement $\bu$ is set to be zero and the inf-sup condition of the $\bv$-$p$ pair is assumed. 

The boundedness of $b^{II}(\cdot,\cdot)$ is easy to verify due to the additional term $\beta\|P_Q\nabla\cdot(\mathbf{u}+\mathbf{v})\|_0$ in the norm $\|\cdot\|_{\mathbb{W}}$:
\begin{equation}
\label{eq:bounded_bII}
b^{II}(\mathbf{u},\mathbf{v};q) \leq \|P_Q\nabla\cdot(\mathbf{u}+\mathbf{v})\|_0\|q\|_0\leq \|(\mathbf{u},\mathbf{v})\|_{\mathbb{W}}\|q\|_{\mathbb{Q}}.
\end{equation}
The coercivity of $a^{II}(\cdot,\cdot)$ is straightforward to prove, as 
\begin{equation}
\label{eq:coercive_bounded_a}
\forall (\mathbf{u},\mathbf{v})\in {\mathbb Z}^{II},\quad a^{II}(\mathbf{u},\mathbf{v};\mathbf{u},\mathbf{v})\equiv \|(\mathbf{u},\mathbf{v})\|_{\mathbb{W}}^2.
\end{equation}

Because $a^{II}(\cdot,\cdot)$ is uniformly coercive only on $\mathbb{Z}^{II}$, we resort to Theorem \ref{thm:babuska2} to prove the well-posedness.

\begin{theorem}
Assume  $\beta=\min\{\max\{\mu,\lambda\},\kappa\}$, $\xi\beta$ is uniformly bounded and the inf-sup conditions (\ref{eq:infsup_up}) and (\ref{eq:infsup_vp}) hold. 
Then the system (\ref{eq:3by3}) is uniformly well-posed with respect to parameters under the norms $\|\cdot\|_{\mathbb{W}}$ and $\|\cdot\|_{\mathbb{Q}}$ defined in (\ref{eq:norm_al}).
\end{theorem}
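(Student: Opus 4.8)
The plan is to verify, one by one, the hypotheses of Theorem \ref{thm:babuska2} with the identifications $\mathbb{M}=\mathbb{W}$, $\mathbb{N}=\mathbb{Q}$, $a=a^{II}$, $b=b^{II}$, $c=c^{I}$ and the norms of (\ref{eq:norm_al}); uniform well-posedness then follows immediately. Most of the pieces are already in hand. Symmetry and positive semi-definiteness of $a^{II}$ and $c^{I}$ are clear from their definitions, and the boundedness (\ref{eq:bounded_a}) of $a^{II}$ follows from two applications of Cauchy--Schwarz, since $\|\mathbf{u}\|_{\mathbb{U}}^2=a^{I}(\mathbf{u},\mathbf{u})$ and $\|\mathbf{v}\|_{\mathbb{V}}^2=(\kappa\mathbf{v},\mathbf{v})$ define the two ingredients of $\|\cdot\|_{\mathbb{W}}$. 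The boundedness (\ref{eq:bounded_b}) of $b^{II}$ is exactly (\ref{eq:bounded_bII}), and the coercivity (\ref{eq:Zelliptic_a}) of $a^{II}$ on $\mathbb{Z}^{II}$ is (\ref{eq:coercive_bounded_a}), where the augmentation term $\beta\|P_Q\nabla\cdot(\mathbf{u}+\mathbf{v})\|_0^2$ drops out precisely because $P_Q\nabla\cdot(\mathbf{u}+\mathbf{v})=0$ on $\mathbb{Z}^{II}$. Finally, since $|\Gamma_{N,u}|>0$ forces $\mathbb{K}^{II}=\{0\}$, condition (\ref{eq:Kelliptic_c}) is vacuous.

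Two steps carry the genuine content. First, the boundedness (\ref{eq:bounede_c}) of $c^{I}$: writing $c^{I}(p,q)=\xi(p,q)$ and using $\|p\|_0^2=\beta\|p\|_{\mathbb{Q}}^2$, Cauchy--Schwarz gives $c^{I}(p,q)\le \xi\beta\,\|p\|_{\mathbb{Q}}\|q\|_{\mathbb{Q}}$, so the constant is $C_c=\xi\beta$, which is uniformly bounded by hypothesis. Second, and this is the crux, the inf-sup condition (\ref{eq:infsup_b}), which I would establish with $|q|_e=\|q\|_{\mathbb{Q}}$ by splitting on the value of $\beta=\min\{\max\{\mu,\lambda\},\kappa\}$. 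If $\max\{\mu,\lambda\}\le\kappa$, then $\beta=\max\{\mu,\lambda\}$, so $\max\{\mu,\lambda,\beta\}=\beta$ and the right-hand side of (\ref{eq:InfSup_up}) collapses to $\beta^{-1/2}\|q\|_0=\|q\|_{\mathbb{Q}}$; if instead $\kappa\le\max\{\mu,\lambda\}$, then $\beta=\kappa$, so $\max\{\kappa,\beta\}=\beta$ and the right-hand side of (\ref{eq:InfSup_vp}) collapses to $\|q\|_{\mathbb{Q}}$. In either case one of the two previously derived augmented inf-sup estimates yields $\sup_{(\mathbf{u},\mathbf{v})}b^{II}(\mathbf{u},\mathbf{v};q)/\|(\mathbf{u},\mathbf{v})\|_{\mathbb{W}}\gtrsim\|q\|_{\mathbb{Q}}$ with a parameter-independent constant.

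Once these are in place, all hypotheses of Theorem \ref{thm:babuska2} hold uniformly, and the conclusion follows. The main obstacle is the inf-sup step: the entire robustness argument hinges on recognizing that choosing $\beta$ to be the \emph{smaller} of $\max\{\mu,\lambda\}$ and $\kappa$ makes exactly one of the two $\max$-expressions appearing in (\ref{eq:InfSup_up})--(\ref{eq:InfSup_vp}) collapse, so that at least one of the two inf-sup routes delivers the full weighted norm $\|q\|_{\mathbb{Q}}$ with constants independent of $\mu$, $\lambda$, $\kappa$, and $\xi$. Had the augmented estimates (\ref{eq:InfSup_up})--(\ref{eq:InfSup_vp}) not already been established, the real difficulty would instead lie in deriving them, namely in showing how the added term $\beta\|P_Q\nabla\cdot(\cdot)\|_0^2$ combines with the base Stokes- and Darcy-type inf-sup conditions (\ref{eq:infsup_up}) and (\ref{eq:infsup_vp}) to produce the $\max$-weighted lower bounds; given those, everything else is bookkeeping.
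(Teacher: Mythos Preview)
Your proposal is correct and follows essentially the same route as the paper: apply Theorem~\ref{thm:babuska2}, note that $\mathbb{K}^{II}=\{0\}$ makes (\ref{eq:Kelliptic_c}) trivial, deduce the inf-sup condition from (\ref{eq:InfSup_up}) or (\ref{eq:InfSup_vp}) according to whether $\beta=\max\{\mu,\lambda\}$ or $\beta=\kappa$, and obtain the boundedness of $c^{I}$ from the uniform bound on $\xi\beta$. Your write-up is in fact more explicit than the paper's own proof, which condenses the same argument into a few lines.
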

\begin{proof}
As $\mathbb{K}^{II}=\{0\}$, (\ref{eq:Kelliptic_c}) is trivial to prove. Consider $\bar q$ for the inf-sup condition of $b^{II}(\cdot,\cdot)$. Due to $\beta=\min\{\max\{\mu,\lambda\},\kappa\}$, the right-hand side of (\ref{eq:InfSup_up}) or (\ref{eq:InfSup_vp}) is equal to $\|\bar q\|_{\mathbb{Q}}$. Therefore, the inf-sup condition of $b^{II}(\cdot,\cdot)$ is proved. 


As $0<\xi\lesssim \beta^{-1}$,  we can prove that $c^I(p,q)\lesssim \|p\|_{\mathbb{Q}}\|q\|_{\mathbb{Q}}$.  Therefore, the assumptions of Theorem \ref{thm:babuska2} hold.  Then the proof is finished by applying Theorem \ref{thm:babuska2}.
\end{proof}

It is obvious that we only need to assume either (\ref{eq:infsup_up}) or (\ref{eq:infsup_vp}) to prove the well-posedness of (\ref{eq:saddle_3}).
\begin{coro}
 Assume $\beta=\max\{\mu,\lambda\}$, $\xi\beta$ is uniformly bounded, and that the inf-sup condition (\ref{eq:infsup_up}) holds. The system (\ref{eq:3by3}) is uniformly well-posed with respect to parameters under the norms defined in (\ref{eq:norm_al}). 
\end{coro}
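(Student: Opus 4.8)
The plan is to reuse the machinery already assembled for the preceding theorem and simply re-check the hypotheses of Theorem~\ref{thm:babuska2} under the single inf-sup assumption (\ref{eq:infsup_up}), observing that the choice $\beta=\max\{\mu,\lambda\}$ is exactly what makes the $\mathbf{u}$-$p$ pair alone sufficient. The starting point is the estimate (\ref{eq:InfSup_up}), which was derived from (\ref{eq:infsup_up}) and gives, for every $q\in(\mathbb{K}^{II})^{\perp}$,
$$\sup_{\mathbf{u},\mathbf{v}}\frac{b^{II}(\mathbf{u},\mathbf{v};q)}{\|(\mathbf{u},\mathbf{v})\|_{\mathbb{W}}}\gtrsim \max\{\mu,\lambda,\beta\}^{-1/2}\|q\|_0.$$
With $\beta=\max\{\mu,\lambda\}$ one has $\max\{\mu,\lambda,\beta\}=\beta$, so the right-hand side equals $\beta^{-1/2}\|q\|_0=\|q\|_{\mathbb{Q}}$ by the definition of the $\mathbb{Q}$-norm in (\ref{eq:norm_al}). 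Since $|\Gamma_{N,u}|>0$ forces $\mathbb{K}^{II}=\{0\}$, this verifies the inf-sup condition (\ref{eq:infsup_b}) for $b^{II}$, and (\ref{eq:Kelliptic_c}) holds trivially. Crucially, the companion estimate (\ref{eq:InfSup_vp}), and hence the assumption (\ref{eq:infsup_vp}), are never invoked.

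Next I would confirm that enlarging $\beta$ from the theorem's value $\min\{\max\{\mu,\lambda\},\kappa\}$ to $\max\{\mu,\lambda\}$ leaves the remaining hypotheses intact. The boundedness of $b^{II}$ in (\ref{eq:bounded_bII}) is insensitive to the value of $\beta$, since it only pairs the augmented term $\beta\|P_Q\nabla\cdot(\mathbf{u}+\mathbf{v})\|_0^2$ in $\|\cdot\|_{\mathbb{W}}$ against the matching factor $\beta^{-1}$ in $\|\cdot\|_{\mathbb{Q}}$. The coercivity of $a^{II}$ on $\mathbb{Z}^{II}$, required by (\ref{eq:Zelliptic_a}), is the identity (\ref{eq:coercive_bounded_a}): on $\mathbb{Z}^{II}$ one has $P_Q\nabla\cdot(\mathbf{u}+\mathbf{v})=0$, so the augmented term drops out regardless of $\beta$ and $a^{II}(\mathbf{u},\mathbf{v};\mathbf{u},\mathbf{v})=\|(\mathbf{u},\mathbf{v})\|_{\mathbb{W}}^2$. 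Symmetry and positive semi-definiteness of $a^{II}$ and $c^I$, together with the boundedness of $a^{II}$, are immediate from the definitions.

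The only place where the larger $\beta$ actually matters is the boundedness (\ref{eq:bounede_c}) of $c^I$: writing $c^I(p,q)=\xi(p,q)\le \xi\|p\|_0\|q\|_0=\xi\beta\,\|p\|_{\mathbb{Q}}\|q\|_{\mathbb{Q}}$, one reads off $C_c=\xi\beta$, which is uniformly bounded precisely by the standing hypothesis that $\xi\beta$ is uniformly bounded (now with $\beta=\max\{\mu,\lambda\}$). With all hypotheses of Theorem~\ref{thm:babuska2} verified, uniform well-posedness of (\ref{eq:saddle_3}), equivalently of (\ref{eq:3by3}), follows. I expect no genuine obstacle here; the one point to state carefully is why $\beta=\max\{\mu,\lambda\}$ is the right choice, namely that it is the smallest value of $\beta$ for which the $\mathbf{u}$-$p$ inf-sup bound $\max\{\mu,\lambda,\beta\}^{-1/2}\|q\|_0$ saturates the target norm $\|q\|_{\mathbb{Q}}$ — this is the sole reason both inf-sup conditions are needed in the theorem but only (\ref{eq:infsup_up}) is needed here.
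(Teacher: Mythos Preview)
Your proposal is correct and follows essentially the same approach as the paper: the paper's proof is a one-line citation of (\ref{eq:InfSup_up}), (\ref{eq:coercive_bounded_a}), the boundedness of $b^{II}$, and Theorem~\ref{thm:babuska2}, and you have simply expanded each of these verifications in detail, correctly identifying that with $\beta=\max\{\mu,\lambda\}$ the estimate (\ref{eq:InfSup_up}) alone saturates $\|q\|_{\mathbb{Q}}$ so that (\ref{eq:infsup_vp}) is not needed.
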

\begin{proof}
The proof follows from (\ref{eq:InfSup_up}), (\ref{eq:coercive_bounded_a}),  (\ref{eq:bounded_b}), and Theorem \ref{thm:babuska2}.
\end{proof}

\begin{coro}
Assume that $\beta=\kappa$, $\xi\beta$ is uniformly bounded, and the inf-sup condition (\ref{eq:infsup_vp}) holds. The system (\ref{eq:3by3}) is uniformly well-posed with respect to parameters under the norms defined in (\ref{eq:norm_al}). 
\end{coro}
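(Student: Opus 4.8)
The plan is to verify the hypotheses of Theorem~\ref{thm:babuska2}, proceeding exactly as in the preceding corollary but with the flux--pressure inf-sup condition \eqref{eq:infsup_vp} playing the role that the displacement--pressure condition \eqref{eq:infsup_up} played there. Since $a^{II}(\cdot;\cdot)$ is coercive only on the kernel $\mathbb{Z}^{II}$ (and not on all of $\mathbb{W}$), Theorem~\ref{thm:babuska2} rather than Theorem~\ref{thm:babuska1} is the appropriate tool, so I would check its hypotheses one at a time against the facts already assembled in this subsection for the augmented-Lagrangian norms \eqref{eq:norm_al}.

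First, since $|\Gamma_{N,u}|>0$ forces $\mathbb{K}^{II}=\{0\}$, condition \eqref{eq:Kelliptic_c} is vacuous. The coercivity \eqref{eq:Zelliptic_a} on $\mathbb{Z}^{II}$ is exactly the identity \eqref{eq:coercive_bounded_a}, the boundedness \eqref{eq:bounded_b} of $b^{II}$ is \eqref{eq:bounded_bII}, and the boundedness \eqref{eq:bounded_a} of $a^{II}$ under $\|\cdot\|_{\mathbb{W}}$ is immediate from the definition of the norm in \eqref{eq:norm_al}. For the boundedness \eqref{eq:bounede_c} of $c^I$, I would estimate $c^I(p,q)=(\xi p,q)\le \xi\|p\|_0\|q\|_0=\xi\beta\,\|p\|_{\mathbb{Q}}\|q\|_{\mathbb{Q}}$, using $\|q\|_0=\beta^{1/2}\|q\|_{\mathbb{Q}}$; the hypothesis that $\xi\beta$ is uniformly bounded then supplies a parameter-independent constant $C_c$.

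The only step that genuinely invokes the assumption $\beta=\kappa$ is the inf-sup condition \eqref{eq:infsup_b}, and this is where I would focus the argument. Starting from \eqref{eq:InfSup_vp}, which is derived from \eqref{eq:infsup_vp}, the supremum of $b^{II}(\mathbf{u},\mathbf{v};q)/\|(\mathbf{u},\mathbf{v})\|_{\mathbb{W}}$ is bounded below by $\max\{\kappa,\beta\}^{-1/2}\|q\|_0$. Choosing $\beta=\kappa$ makes this factor equal to $\beta^{-1/2}$, so the lower bound becomes $\beta^{-1/2}\|q\|_0=\|q\|_{\mathbb{Q}}$; and because $\mathbb{K}^{II}=\{0\}$ we have $\bar q=q$, so the full inf-sup condition \eqref{eq:infsup_b} holds with a constant independent of all parameters. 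With every hypothesis of Theorem~\ref{thm:babuska2} verified, the uniform well-posedness under the norms \eqref{eq:norm_al} follows.

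The main (and essentially only) subtlety is the calibration in this last step: the flux-based inf-sup constant in \eqref{eq:InfSup_vp} degrades like $\max\{\kappa,\beta\}^{-1/2}$, and only the choice $\beta=\kappa$ matches it cleanly to the pressure norm $\|q\|_{\mathbb{Q}}=\beta^{-1/2}\|q\|_0$; any larger $\beta$ would reintroduce parameter dependence into the inf-sup constant. Everything else is a direct transcription of the estimates already established for \eqref{eq:saddle_3}, so I expect no further obstacle.
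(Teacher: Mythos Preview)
Your proposal is correct and follows essentially the same approach as the paper: the paper's proof is a one-line reference to \eqref{eq:InfSup_vp}, \eqref{eq:coercive_bounded_a}, the boundedness of $b^{II}$, and Theorem~\ref{thm:babuska2}, and you have simply spelled out each verification in detail. The calibration you highlight (that $\beta=\kappa$ turns the lower bound in \eqref{eq:InfSup_vp} into exactly $\|q\|_{\mathbb{Q}}$) is precisely the point, and the remaining checks are routine as you indicate.
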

\begin{proof}
The proof follows from (\ref{eq:InfSup_vp}), (\ref{eq:coercive_bounded_a}),  (\ref{eq:bounded_b}), and Theorem \ref{thm:babuska2}.
\end{proof}

\begin{remark}
The assumption that both (\ref{eq:infsup_up}) and (\ref{eq:infsup_vp}) hold results in a smaller parameter $\beta$ than the cases where only one of (\ref{eq:infsup_up}) and (\ref{eq:infsup_vp}) holds.
\end{remark}

Based on the well-posed formulation, we derive the corresponding optimal block diagonal preconditioner.

\subsubsection{Matrix form}
We introduce some additional matrix notation.  Also, we introduce the FEM basis $\{\mathbf{v}_i\}$ for $\mathbb{V}$.  Define the stiffness matrices $(M_v)_{ij}=(\mathbf{v}_i,\mathbf{v}_j)$, $(A_v)_{ij}=(\kappa\mathbf{v}_i,\mathbf{v}_j)$, $(C_p)_{ij}=c^I(p_i,p_j)$, and $(B_v)_{ij}=(\nabla\cdot \mathbf{v}_i,p_j)$.

Then the system matrix of the three-field formulation is 

$$
S^{III}=
\left(
\begin{array}{ccc}
A_u & &B_u^T\\
& A_v& B_v^T\\
B_u&B_v&-C_p \\
\end{array}
\right).
$$

The block preconditioner is 
$$P_1^{III}
=
\left(
\begin{array}{ccc}
A_u+\beta B_u^TM_p^{-1}B_u &\beta B_u^TM_p^{-1}B_v&\\
\beta B_v^TM_p^{-1}B_u&A_v+\beta B_v^TM_p^{-1}B_v&\\
&& {\beta^{-1}}M_p+C_p\\
\end{array}
\right)^{-1}.
$$ 

%
In order to be uniformly optimal with respect to the parameters,  $\beta$ is chosen as follows:
\begin{itemize}
\item $\beta=\max\{\mu,\lambda\}$, if (\ref{eq:infsup_up}) holds;
\item $\beta = \kappa$, if (\ref{eq:infsup_vp}) holds;
\item $\beta = \min\{\max\{\mu,\lambda\},\kappa\}$, if both (\ref{eq:infsup_up}) and (\ref{eq:infsup_vp}) hold.
\end{itemize}

\subsection{Block diagonal preconditioners}
We can formulate block diagonal preconditioners based on (\ref{eq:infsup_up}).  Define another pair of norms for spaces ${\mathbb W}$ and $\mathbb{Q}$:
\begin{equation}
\label{eq:norm1}
\begin{aligned}
\|(\mathbf{u},\mathbf{v})\|_{\mathbb{W}}^2=& \|\mathbf{u}\|_{\mathbb{U}}^2+\|\mathbf{v}\|_{\mathbb{V}}^2+\beta\|P_Q\nabla\cdot\mathbf{v}\|_0^2,~~~\|q\|_{\mathbb{Q}}^2=\beta^{-1}\|q\|^2_0,
\end{aligned}
\end{equation}
where $\beta=\min\{\max\left\{{\mu},{\lambda}\right\},\kappa\}$.

\begin{theorem}
\label{thm:divv}
Assume  $\beta=\min\{\max\{\mu,\lambda\},\kappa\}$, $\xi\beta$ is uniformly bounded and the inf-sup conditions (\ref{eq:infsup_up}) and (\ref{eq:infsup_vp}) hold. 
Then the system (\ref{eq:3by3}) is uniformly well-posed with respect to parameters under the norms $\|\cdot\|_{\mathbb{W}}$ and $\|\cdot\|_{\mathbb{Q}}$ defined in (\ref{eq:norm1}).
\end{theorem}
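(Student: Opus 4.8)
The plan is to verify the hypotheses of Theorem~\ref{thm:babuska2} for the triple $(a^{II},b^{II},c^I)$ under the norms defined in (\ref{eq:norm1}), taking as the semi-norm appearing in (\ref{eq:bounded_b}) and (\ref{eq:infsup_b}) the full norm $|q|_e:=\|q\|_{\mathbb{Q}}=\beta^{-1/2}\|q\|_0$. Since $\mathbb{K}^{II}=\{0\}$, this is genuinely a norm and the ellipticity (\ref{eq:Kelliptic_c}) is vacuous. Symmetry and positive semi-definiteness of $a^{II}$ and $c^I$ are immediate. The boundedness (\ref{eq:bounded_a}) of $a^{II}$ follows termwise from Cauchy--Schwarz because $\|\cdot\|_{\mathbb{W}}$ dominates both $\|\mathbf{u}\|_{\mathbb{U}}$ and $\|\mathbf{v}\|_{\mathbb{V}}$, and the boundedness (\ref{eq:bounede_c}) of $c^I$ reduces to $c^I(p,q)=(\xi p,q)\leq\xi\beta\,\|p\|_{\mathbb{Q}}\|q\|_{\mathbb{Q}}$, which is uniform because $\xi\beta$ is bounded.

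The first genuine step is the boundedness (\ref{eq:bounded_b}) of $b^{II}$. I split $b^{II}(\mathbf{u},\mathbf{v};q)=(\nabla\cdot\mathbf{u},q)+(\nabla\cdot\mathbf{v},q)$. For the flux term I write $(\nabla\cdot\mathbf{v},q)=(P_Q\nabla\cdot\mathbf{v},q)$ and invoke the augmentation term of (\ref{eq:norm1}) to bound it by $\|(\mathbf{u},\mathbf{v})\|_{\mathbb{W}}\|q\|_{\mathbb{Q}}$. For the displacement term the key estimate is $\|\nabla\cdot\mathbf{u}\|_0\lesssim\max\{\mu,\lambda\}^{-1/2}\|\mathbf{u}\|_{\mathbb{U}}$, obtained by combining the pointwise inequality $(\nabla\cdot\mathbf{u})^2\leq n\,|\epsilon(\mathbf{u})|^2$ (which controls the $\mu$ part of $\|\mathbf{u}\|_{\mathbb{U}}^2$) with the direct bound $\lambda\|\nabla\cdot\mathbf{u}\|_0^2\leq\|\mathbf{u}\|_{\mathbb{U}}^2$; since $\beta\leq\max\{\mu,\lambda\}$, this yields $(\nabla\cdot\mathbf{u},q)\lesssim\|\mathbf{u}\|_{\mathbb{U}}\|q\|_{\mathbb{Q}}$.

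For the inf-sup condition (\ref{eq:infsup_b}) I adapt the computations (\ref{eq:InfSup_up}) and (\ref{eq:InfSup_vp}). With the norm (\ref{eq:norm1}), testing against $(\mathbf{u},0)$ gives $\|(\mathbf{u},0)\|_{\mathbb{W}}=\|\mathbf{u}\|_{\mathbb{U}}$ (the augmentation vanishes), so (\ref{eq:infsup_up}) produces a lower bound $\gtrsim\max\{\mu,\lambda\}^{-1/2}\|q\|_0$; testing against $(0,\mathbf{v})$ gives $\|(0,\mathbf{v})\|_{\mathbb{W}}^2\leq\kappa\|\mathbf{v}\|_{H({\rm div})}^2$ (using $\beta\leq\kappa$), so (\ref{eq:infsup_vp}) produces a lower bound $\gtrsim\kappa^{-1/2}\|q\|_0$. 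Because $\beta=\min\{\max\{\mu,\lambda\},\kappa\}$, one of these two constants equals $\beta^{-1/2}$, so selecting the corresponding test function gives $\sup_{\mathbf{u},\mathbf{v}}b^{II}(\mathbf{u},\mathbf{v};q)/\|(\mathbf{u},\mathbf{v})\|_{\mathbb{W}}\gtrsim\beta^{-1/2}\|q\|_0=\|q\|_{\mathbb{Q}}$.

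The crux is the coercivity (\ref{eq:Zelliptic_a}) of $a^{II}$ on the kernel $\mathbb{Z}^{II}$; this is exactly where the block-diagonal norm (\ref{eq:norm1}) departs from the augmented-Lagrangian norm (\ref{eq:norm_al}), so the identity (\ref{eq:coercive_bounded_a}) no longer holds. The idea is to exploit the constraint: for $(\mathbf{u},\mathbf{v})\in\mathbb{Z}^{II}$ one has $P_Q\nabla\cdot(\mathbf{u}+\mathbf{v})=0$, hence $\|P_Q\nabla\cdot\mathbf{v}\|_0=\|P_Q\nabla\cdot\mathbf{u}\|_0$, so the augmentation term may be transferred onto $\mathbf{u}$ and then absorbed. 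Using $\|P_Q\nabla\cdot\mathbf{u}\|_0\leq\|\nabla\cdot\mathbf{u}\|_0\lesssim\max\{\mu,\lambda\}^{-1/2}\|\mathbf{u}\|_{\mathbb{U}}$ together with $\beta\leq\max\{\mu,\lambda\}$ gives $\beta\|P_Q\nabla\cdot\mathbf{v}\|_0^2\lesssim\|\mathbf{u}\|_{\mathbb{U}}^2$, whence $\|(\mathbf{u},\mathbf{v})\|_{\mathbb{W}}^2\lesssim\|\mathbf{u}\|_{\mathbb{U}}^2+\|\mathbf{v}\|_{\mathbb{V}}^2=a^{II}(\mathbf{u},\mathbf{v};\mathbf{u},\mathbf{v})$ on $\mathbb{Z}^{II}$. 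I expect this transfer-through-the-constraint step to be the main obstacle, since it is what makes an augmentation involving $\mathbf{v}$ alone sufficient for kernel coercivity. Once all the hypotheses of Theorem~\ref{thm:babuska2} are in place, the asserted uniform well-posedness follows at once.
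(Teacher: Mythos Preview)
Your proof is correct and follows the same approach as the paper's: verify the hypotheses of Theorem~\ref{thm:babuska2}, handle the inf-sup for $b^{II}$ by testing with $(\mathbf u,0)$ or $(0,\mathbf v)$ and using $\beta=\min\{\max\{\mu,\lambda\},\kappa\}$, and obtain kernel coercivity by the constraint transfer $P_Q\nabla\cdot\mathbf v=-P_Q\nabla\cdot\mathbf u$ together with $\beta\|\nabla\cdot\mathbf u\|_0^2\lesssim\|\mathbf u\|_{\mathbb U}^2$. Your treatment is in fact slightly more explicit than the paper's in two places: you re-derive the inf-sup bounds for the norm (\ref{eq:norm1}) rather than citing (\ref{eq:InfSup_up})--(\ref{eq:InfSup_vp}) verbatim (those were stated for the AL norm (\ref{eq:norm_al})), and you spell out the estimate $\|\nabla\cdot\mathbf u\|_0\lesssim\max\{\mu,\lambda\}^{-1/2}\|\mathbf u\|_{\mathbb U}$ needed both for the boundedness of $b^{II}$ and the kernel coercivity.
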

\begin{proof}
We need to verify the assumptions of Theorem \ref{thm:babuska2} in order to finish the proof. The inf-sup condition of $b^{II}(\cdot,\cdot)$ follows from (\ref{eq:InfSup_up}) or (\ref{eq:InfSup_vp}) and the assumption that $\beta=\min\{\max\{\mu,\lambda\},\kappa\}$.  The boundedness of $b^{II}(\cdot,\cdot)$ can be shown to be uniform:
$$
\begin{aligned}
&b^{II}(\mathbf{u},\mathbf{v}; p)\leq (\|\nabla\cdot \mathbf{u}\|_0+\|\nabla\cdot\mathbf{v}\|_0)\|p\|_0\\
\lesssim &\|\mathbf{u}\|_{\mathbb{U}}{\beta^{-1/2}}\|p\|_0+ \|\mathbf{v}\|_{\mathbb{V}}{\beta^{-1/2}}\|p\|_0 \lesssim \|(\mathbf{u},\mathbf{v})\|_{\mathbb{W}}\|p\|_{\mathbb{Q}}.
\end{aligned}
$$

In the kernel $\mathbb{Z}^{II}$ we have $P_{Q}\nabla\cdot \mathbf{u} =-P_Q\nabla\cdot\mathbf{v}$; therefore, the coercivity can be shown as $\forall (\mathbf{u},\mathbf{v})\in\mathbb{Z}^{II}$
$$
\begin{aligned}
&a^{II}(\mathbf{u},\mathbf{v}; \mathbf{u},\mathbf{v})\gtrsim a^{II}(\mathbf{u},\mathbf{v};\mathbf{u},\mathbf{v})+ \beta\|\nabla\cdot \mathbf{u}\|_0^2\\
\geq& a^{II}(\mathbf{u},\mathbf{v};\mathbf{u},\mathbf{v})+\beta\|P_Q\nabla\cdot\mathbf{v}\|_0^2= \|(\mathbf{u},\mathbf{v})\|_{\mathbb{W}}^2.
\end{aligned}
$$

The boundedness of $a^{II}(\cdot,\cdot)$ and the assumptions on $c^I(\cdot,\cdot)$ are straightforward to verify.
\end{proof}

\begin{coro}
 Assume that the inf-sup condition (\ref{eq:infsup_up}) holds,  $\beta=\max\left\{{\mu},{\lambda}\right\}$ and $\xi\beta$ is uniformly bounded. The system (\ref{eq:3by3}) is uniformly well-posed with respect to parameters under the norms $\|\cdot\|_{\mathbb{W}}$ and $\|\cdot\|_{\mathbb{Q}}$ defined in (\ref{eq:norm1}).
\end{coro}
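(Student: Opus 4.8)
The plan is to derive this corollary from Theorem \ref{thm:babuska2}, following the same route as the proof of Theorem \ref{thm:divv} but using \emph{only} the inf-sup condition (\ref{eq:infsup_up}) and the choice $\beta=\max\{\mu,\lambda\}$. Since $|\Gamma_{N,u}|>0$ forces $\mathbb{K}^{II}=\{0\}$, hypothesis (\ref{eq:Kelliptic_c}) is vacuous and $\bar q=q$, while the symmetry and positive semi-definiteness of $a^{II}$ and $c^I$ are immediate from their definitions. Hence it remains to verify, in the norms (\ref{eq:norm1}), the boundedness and kernel-coercivity of $a^{II}$, the boundedness of $c^I$, and the boundedness and inf-sup condition of $b^{II}$.

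For the inf-sup condition I would restrict the supremum in $b^{II}$ to the slice $\mathbf{v}=0$. Under the norm (\ref{eq:norm1}) the augmentation term involves only $\mathbf{v}$, so $\|(\mathbf{u},0)\|_{\mathbb{W}}=\|\mathbf{u}\|_{\mathbb{U}}$, which is bounded above by the denominator $(\|\mathbf{u}\|_{\mathbb{U}}^2+\beta\|P_Q\nabla\cdot\mathbf{u}\|_0^2)^{1/2}$ appearing in (\ref{eq:InfSup_up}); therefore (\ref{eq:InfSup_up}) gives a lower bound a fortiori. Combining the Korn/trace estimate $\|\mathbf{u}\|_{\mathbb{U}}\lesssim\beta^{1/2}\|\mathbf{u}\|_1$ with (\ref{eq:infsup_up}) yields $\sup_{\mathbf{u}}(\nabla\cdot\mathbf{u},q)/\|\mathbf{u}\|_{\mathbb{U}}\gtrsim\beta^{-1/2}\|q\|_0$. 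The precise reason for taking $\beta=\max\{\mu,\lambda\}$ is that the factor $\max\{\mu,\lambda,\beta\}^{-1/2}$ in (\ref{eq:InfSup_up}) then collapses to $\beta^{-1/2}\|q\|_0=\|q\|_{\mathbb{Q}}$, so (\ref{eq:infsup_b}) holds with a parameter-independent constant.

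For the remaining conditions I would split $b^{II}(\mathbf{u},\mathbf{v};p)=(P_Q\nabla\cdot\mathbf{u},p)+(P_Q\nabla\cdot\mathbf{v},p)$: the $\mathbf{v}$-term is controlled directly by the augmentation term, bounded by $\beta^{-1/2}\|(\mathbf{u},\mathbf{v})\|_{\mathbb{W}}\cdot\beta^{1/2}\|p\|_{\mathbb{Q}}$. Coercivity on $\mathbb{Z}^{II}$ uses $P_Q\nabla\cdot\mathbf{u}=-P_Q\nabla\cdot\mathbf{v}$ on the kernel, which lets the augmentation term be absorbed into $\|\mathbf{u}\|_{\mathbb{U}}^2$, reproducing (\ref{eq:coercive_bounded_a}). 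The boundedness of $a^{II}$ is Cauchy-Schwarz, and $c^I(p,q)=\xi(p,q)\le\xi\beta\,\|p\|_{\mathbb{Q}}\|q\|_{\mathbb{Q}}$ is bounded because $\xi\beta$ is uniformly bounded by hypothesis. Applying Theorem \ref{thm:babuska2} then concludes the proof.

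The step requiring the most care is the $\mathbf{u}$-component of the boundedness of $b^{II}$. Because the norm (\ref{eq:norm1}) augments only the divergence of $\mathbf{v}$, the quantity $\|\nabla\cdot\mathbf{u}\|_0$ must be controlled purely by $\|\mathbf{u}\|_{\mathbb{U}}$, and the scaling $\|\nabla\cdot\mathbf{u}\|_0\lesssim\beta^{-1/2}\|\mathbf{u}\|_{\mathbb{U}}$ with $\beta=\max\{\mu,\lambda\}$ holds only by using \emph{both} contributions to $\|\mathbf{u}\|_{\mathbb{U}}^2$: the Lamé term $\lambda\|\nabla\cdot\mathbf{u}\|_0^2$ yields the constant $\lambda^{-1/2}$, while the trace inequality $\|\nabla\cdot\mathbf{u}\|_0\le\sqrt{n}\,\|\epsilon(\mathbf{u})\|_0$ applied to $2\mu\|\epsilon(\mathbf{u})\|_0^2$ yields $\mu^{-1/2}$, and their minimum is exactly $\beta^{-1/2}$. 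This is the only place where the $\max$-structure of $\beta$ is essential, and keeping the constant independent of $\mu$ and $\lambda$ there is the crux.
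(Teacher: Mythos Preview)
Your proposal is correct and follows essentially the same route as the paper's own proof: invoke Theorem~\ref{thm:babuska2}, obtain the inf-sup of $b^{II}$ from (\ref{eq:InfSup_up}) (restricting to $\mathbf{v}=0$, where the denominator under norm (\ref{eq:norm1}) is even smaller), and verify boundedness of $b^{II}$ and kernel-coercivity of $a^{II}$ exactly as in Theorem~\ref{thm:divv}. One minor labeling point: your reference to (\ref{eq:coercive_bounded_a}) is slightly off, since that identity is stated for the AL norm (\ref{eq:norm_al}); under (\ref{eq:norm1}) the conclusion on $\mathbb{Z}^{II}$ is only $a^{II}\gtrsim\|\cdot\|_{\mathbb{W}}^2$, which is precisely what your absorption argument yields and what is needed.
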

\begin{proof}
The proof follows from  Theorem \ref{thm:babuska2} and the inf-sup condition of $b^{II}(\cdot,\cdot)$ follows from (\ref{eq:InfSup_up}). And the boundedness of $b^{II}(\cdot,\cdot)$ and coercivity of $a^{II}(\cdot, \cdot)$ can be obtained similarly to the proof of Theorem \ref{thm:divv}.
\end{proof}

\begin{coro}
Assume that $\beta=\kappa$, $\xi\beta$ is uniformly bounded, and the inf-sup condition (\ref{eq:infsup_vp}) holds. The system (\ref{eq:3by3}) is uniformly well-posed with respect to parameters under the norms defined in (\ref{eq:norm1}). 
\end{coro}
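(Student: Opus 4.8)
The plan is to check the hypotheses of Theorem~\ref{thm:babuska2} for $a^{II}$, $b^{II}$ and $c^I$ under the norms~(\ref{eq:norm1}) with $\beta=\kappa$, following the proof of Theorem~\ref{thm:divv} almost verbatim except that the inf-sup supply is switched from the $\mathbf{u}$-$p$ condition~(\ref{eq:infsup_up}) to the $\mathbf{v}$-$p$ condition~(\ref{eq:infsup_vp}). The symmetry and positive semi-definiteness of $a^{II}$ and $c^I$ are immediate. Since $|\Gamma_{N,u}|>0$ forces $\mathbb{K}^{II}=\{0\}$, condition~(\ref{eq:Kelliptic_c}) is vacuous and $\bar q=q$ for all $q\in\mathbb{Q}$, so no auxiliary semi-norm is needed and the target inf-sup reduces to a lower bound by $\|q\|_{\mathbb{Q}}$.

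For the inf-sup bound~(\ref{eq:infsup_b}) of $b^{II}$ I would restrict the test space in~(\ref{eq:InfSup_vp}) to pairs $(0,\mathbf{v})$. For the norm~(\ref{eq:norm1}) one has $\|(0,\mathbf{v})\|_{\mathbb{W}}^2=\|\mathbf{v}\|_{\mathbb{V}}^2+\beta\|P_Q\nabla\cdot\mathbf{v}\|_0^2$, so the middle member of~(\ref{eq:InfSup_vp}) is exactly $\sup_{\mathbf{v}}b^{II}(0,\mathbf{v};q)/\|(0,\mathbf{v})\|_{\mathbb{W}}$. Setting $\beta=\kappa$ makes $\max\{\kappa,\beta\}^{-1/2}=\beta^{-1/2}$, and the right-hand side becomes $\beta^{-1/2}\|q\|_0=\|q\|_{\mathbb{Q}}$, which is precisely~(\ref{eq:infsup_b}).

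The boundedness and coercivity I would reproduce as in Theorem~\ref{thm:divv}. For the boundedness of $b^{II}$ the $\mathbf{v}$-contribution is absorbed by the augmentation term in $\|\cdot\|_{\mathbb{W}}$, using $(\nabla\cdot\mathbf{v},q)=(P_Q\nabla\cdot\mathbf{v},q)\le\|P_Q\nabla\cdot\mathbf{v}\|_0\|q\|_0\le\|(\mathbf{u},\mathbf{v})\|_{\mathbb{W}}\|q\|_{\mathbb{Q}}$, while the $\mathbf{u}$-contribution comes from $\|\nabla\cdot\mathbf{u}\|_0\lesssim\max\{\mu,\lambda\}^{-1/2}\|\mathbf{u}\|_{\mathbb{U}}$ combined with $\|q\|_0=\beta^{1/2}\|q\|_{\mathbb{Q}}$. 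For the coercivity of $a^{II}$ on $\mathbb{Z}^{II}$ I would use the kernel identity $P_Q\nabla\cdot\mathbf{u}=-P_Q\nabla\cdot\mathbf{v}$ to transfer the augmentation onto $\mathbf{u}$, yielding $a^{II}(\mathbf{u},\mathbf{v};\mathbf{u},\mathbf{v})\gtrsim\|(\mathbf{u},\mathbf{v})\|_{\mathbb{W}}^2$ exactly as there. The boundedness of $a^{II}$ is trivial, and the hypothesis that $\xi\beta$ is uniformly bounded gives $c^I(p,q)=\xi(p,q)\le\xi\beta\|p\|_{\mathbb{Q}}\|q\|_{\mathbb{Q}}\lesssim\|p\|_{\mathbb{Q}}\|q\|_{\mathbb{Q}}$, i.e.~(\ref{eq:bounede_c}). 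Then Theorem~\ref{thm:babuska2} applies.

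The main obstacle is the uniformity of the two steps touching the $\mathbf{u}$-block, because~(\ref{eq:norm1}) augments only $\mathbf{v}$: both the $\mathbf{u}$-part of the $b^{II}$-boundedness and the absorption of $\beta\|P_Q\nabla\cdot\mathbf{u}\|_0^2$ into $\|\mathbf{u}\|_{\mathbb{U}}^2$ in the coercivity step produce the factor $\beta/\max\{\mu,\lambda\}$ and hence demand $\beta\lesssim\max\{\mu,\lambda\}$. With $\beta=\kappa$ this is the regime $\kappa\lesssim\max\{\mu,\lambda\}$, in which $\kappa=\min\{\max\{\mu,\lambda\},\kappa\}$ and choosing $\beta=\kappa$ is exactly what is intended; in that regime all constants are parameter-independent and the argument closes. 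Outside it one would instead take $\beta=\max\{\mu,\lambda\}$ as in the preceding corollary.
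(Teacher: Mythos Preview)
Your argument is the paper's argument: invoke Theorem~\ref{thm:babuska2}, draw the inf-sup of $b^{II}$ from~(\ref{eq:InfSup_vp}) with $\beta=\kappa$, and take the boundedness of $b^{II}$ and the $\mathbb{Z}^{II}$-coercivity of $a^{II}$ verbatim from the proof of Theorem~\ref{thm:divv}. The paper's own proof says exactly this in two sentences and nothing more.

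Your closing paragraph goes beyond the paper, and the observation is correct. With the norm~(\ref{eq:norm1}) augmenting only the $\mathbf{v}$-divergence, the $\mathbf{u}$-contribution to the boundedness of $b^{II}$ and the kernel-coercivity step (which absorbs $\beta\|P_Q\nabla\cdot\mathbf{u}\|_0^2$ into $\|\mathbf{u}\|_{\mathbb U}^2$) both pick up the factor $\beta/\max\{\mu,\lambda\}$, and there is no alternative route through the $\mathbf{v}$-block for either estimate. Hence uniformity with $\beta=\kappa$ indeed lives in the regime $\kappa\lesssim\max\{\mu,\lambda\}$, where $\kappa=\min\{\max\{\mu,\lambda\},\kappa\}$ and this corollary coincides with the choice in Theorem~\ref{thm:divv}. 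The paper does not state this restriction; your caveat is a valid sharpening of the corollary rather than a defect in your proof.
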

\begin{proof}
The proof follows from Theorem \ref{thm:babuska2} and the inf-sup condition of $b^{II}(\cdot,\cdot)$ follows from (\ref{eq:InfSup_vp}). And the boundedness of $b^{II}(\cdot,\cdot)$ and coercivity of $a^{II}(\cdot, \cdot)$ can be obtained similarly to the proof of Theorem \ref{thm:divv}.

\end{proof}

\begin{remark}
The assumption that both (\ref{eq:infsup_up}) and (\ref{eq:infsup_vp}) hold results in a smaller parameter $\beta$ than the cases where only one of (\ref{eq:infsup_up}) and (\ref{eq:infsup_vp}) holds. 
\end{remark}

%

\subsubsection{Matrix form}
The matrix form of the block diagonal preconditioner is as follows:

$$P_2^{III}
=
\left(
\begin{array}{ccc}
A_u &&\\
&A_v+\beta B_v^TM_p^{-1}B_v&\\
&& {\beta^{-1}}M_p+C_p\\
\end{array}
\right)^{-1},
$$ 
where $\beta=\min\{\max\{\mu,\lambda\},\kappa\}$.

%


In the previous approach, we added $\beta(\nabla \cdot\mathbf{v},\nabla\cdot\mathbf{v})$ to the norm. This term causes some difficulty for the block solvers when $\beta$ is large.
There are lots of studies on this topic, and we may resort to Hiptmair-Xu preconditioners \cite{Hiptmair.R;Xu.J2007a}.  Here, we can avoid this term by adding a Laplace-like term on the pressure diagonal block. This is also used in the mixed formulation for Poisson equations.

We define the norm for spaces ${\mathbb W}$ and $\mathbb{Q}$:

\begin{equation}
\label{eq:norm2}
\begin{aligned}
\|(\mathbf{u},\mathbf{v})\|_{\mathbb{W}}^2&= \|\mathbf{u}\|_{\mathbb{U}}^2+\|\mathbf{v}\|_{\mathbb{V}}^2,~~~\|q\|_{\mathbb{Q}}^2=\beta^{-1}\| q\|^2_0+\|\mbox{div}^*_Vq\|_{\mathbb{V}'}^2,
\end{aligned}
\end{equation}
where $\beta=\max\{\mu,\lambda\}$ and  $\mbox{div}_V^{*}:{\mathbb Q}\mapsto{\mathbb V}'$ is the adjoint operator of $\mbox{div}_V:{\mathbb V}\mapsto{\mathbb Q}'$; i.e.,
$$\langle\mbox{div}_V^*q,\mathbf{v}\rangle:=\langle q,\mbox{div}_V\mathbf{v}\rangle,~~\forall \mathbf{v}\in \mathbb{V}, q\in \mathbb{Q}.$$

\begin{theorem}
Assume that the inf-sup conditions (\ref{eq:infsup_up}) and (\ref{eq:infsup_vp}) hold and $\beta=\max\left\{{\mu},{\lambda}\right\}$. The system (\ref{eq:3by3}) is uniformly well-posed with respect to parameters under the norms in (\ref{eq:norm2}).
\end{theorem}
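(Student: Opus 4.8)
The plan is to invoke Theorem~\ref{thm:babuska1} rather than Theorem~\ref{thm:babuska2}, because under the norm (\ref{eq:norm2}) the form $a^{II}(\cdot;\cdot)$ is coercive on the \emph{entire} space $\mathbb{W}$: one has the identity $a^{II}(\mathbf{u},\mathbf{v};\mathbf{u},\mathbf{v})=\|\mathbf{u}\|_{\mathbb{U}}^2+\|\mathbf{v}\|_{\mathbb{V}}^2=\|(\mathbf{u},\mathbf{v})\|_{\mathbb{W}}^2$, so (\ref{eq:elliptic_a}) holds with $\gamma_a=1$ and (\ref{eq:bounded_a}) follows from Cauchy--Schwarz with $C_a=1$. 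I would take the semi-norm on $\mathbb{Q}$ to be the full target norm itself, $|q|_e:=\|q\|_{\mathbb{Q}}$; since $\mathbb{K}^{II}=\{0\}$ this is genuinely a norm, so the requirement $|q|_e\neq0$ on $\mathbb{Q}\setminus\mathbb{K}^{II}$ is met and the condition $c^I(q,q)>0$ on $\mathbb{K}^{II}\setminus\{0\}$ is vacuous, while $c^I(p,q)=(\xi p,q)$ is plainly symmetric and positive semi-definite.

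Next I would verify the inf-sup bound (\ref{eq:infsup_b}). Testing $b^{II}(\mathbf{u},\mathbf{v};q)$ against $(\mathbf{u},\mathbf{0})$ and against $(\mathbf{0},\mathbf{v})$ separately, and using $\|(\mathbf{u},\mathbf{0})\|_{\mathbb{W}}=\|\mathbf{u}\|_{\mathbb{U}}$, $\|(\mathbf{0},\mathbf{v})\|_{\mathbb{W}}=\|\mathbf{v}\|_{\mathbb{V}}$, gives
\[
\sup_{(\mathbf{u},\mathbf{v})}\frac{b^{II}(\mathbf{u},\mathbf{v};q)}{\|(\mathbf{u},\mathbf{v})\|_{\mathbb{W}}}\ \geq\ \max\Big\{\sup_{\mathbf{u}}\frac{(\nabla\cdot\mathbf{u},q)}{\|\mathbf{u}\|_{\mathbb{U}}},\ \sup_{\mathbf{v}}\frac{(\nabla\cdot\mathbf{v},q)}{\|\mathbf{v}\|_{\mathbb{V}}}\Big\}.
\]
The second supremum is exactly $\|\mbox{div}^*_Vq\|_{\mathbb{V}'}$ by the definition of the dual norm, and the first is $\gtrsim\beta^{-1/2}\|q\|_0$ by (\ref{eq:infsup_up}) together with the elementary bound $\|\mathbf{u}\|_{\mathbb{U}}\lesssim\beta^{1/2}\|\mathbf{u}\|_1$ (as in the derivation of (\ref{eq:InfSup_up})). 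Since the maximum of two nonnegative numbers dominates their root-mean-square, these combine to $\gtrsim(\beta^{-1}\|q\|_0^2+\|\mbox{div}^*_Vq\|_{\mathbb{V}'}^2)^{1/2}=\|q\|_{\mathbb{Q}}=|q|_e$; note that (\ref{eq:infsup_up}) alone drives this bound.

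The step I expect to be the crux is the boundedness (\ref{eq:bounded_b}), $b^{II}(\mathbf{u},\mathbf{v};q)\lesssim\|(\mathbf{u},\mathbf{v})\|_{\mathbb{W}}\,|q|_e$, precisely because of its parameter dependence. The $\mathbf{v}$-term is immediate, $(\nabla\cdot\mathbf{v},q)\leq\|\mathbf{v}\|_{\mathbb{V}}\|\mbox{div}^*_Vq\|_{\mathbb{V}'}\leq\|\mathbf{v}\|_{\mathbb{V}}\|q\|_{\mathbb{Q}}$. For the $\mathbf{u}$-term the key is the uniform estimate $\|\nabla\cdot\mathbf{u}\|_0\lesssim\beta^{-1/2}\|\mathbf{u}\|_{\mathbb{U}}$: indeed $\lambda\|\nabla\cdot\mathbf{u}\|_0^2\leq\|\mathbf{u}\|_{\mathbb{U}}^2$ gives the bound with $\lambda^{-1/2}$, whereas $\nabla\cdot\mathbf{u}=\mathrm{tr}\,\epsilon(\mathbf{u})$ together with $2\mu\|\epsilon(\mathbf{u})\|_0^2\leq\|\mathbf{u}\|_{\mathbb{U}}^2$ gives it with $\mu^{-1/2}$ up to a dimensional factor, and since $\beta=\max\{\mu,\lambda\}$ the smaller of the two always yields $\beta^{-1/2}$. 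With $\|q\|_0\leq\beta^{1/2}\|q\|_{\mathbb{Q}}$ this produces $(\nabla\cdot\mathbf{u},q)\leq\|\nabla\cdot\mathbf{u}\|_0\|q\|_0\lesssim\|\mathbf{u}\|_{\mathbb{U}}\|q\|_{\mathbb{Q}}$, so (\ref{eq:bounded_b}) holds with a parameter-independent $C_b$.

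Finally, Theorem~\ref{thm:babuska1} then delivers uniform well-posedness under $\|\cdot\|_{\mathbb{W}}$ and the induced norm $\|q\|_{\mathbb{N}}^2=|q|_e^2+c^I(q,q)=\|q\|_{\mathbb{Q}}^2+\xi\|q\|_0^2$. To identify this with (\ref{eq:norm2}) I would use that $\xi\beta$ is uniformly bounded (the standing assumption of this section), whence $\xi\|q\|_0^2\leq\xi\beta\,\|q\|_{\mathbb{Q}}^2\lesssim\|q\|_{\mathbb{Q}}^2$ and therefore $\|q\|_{\mathbb{N}}\ucong\|q\|_{\mathbb{Q}}$. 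This same boundedness of $\xi\beta$ is exactly what is needed to control $c^I$ under $\|\cdot\|_{\mathbb{Q}}$ in the first place, so it cannot be dispensed with.
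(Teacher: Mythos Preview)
Your proof is correct and follows essentially the same approach as the paper: invoke Theorem~\ref{thm:babuska1} via the identity $a^{II}(\mathbf{u},\mathbf{v};\mathbf{u},\mathbf{v})=\|(\mathbf{u},\mathbf{v})\|_{\mathbb{W}}^2$, verify the inf-sup for $b^{II}$ by testing against $(\mathbf{u},\mathbf{0})$ and $(\mathbf{0},\mathbf{v})$ separately, and check boundedness of $b^{II}$ using $\|\nabla\cdot\mathbf{u}\|_0\lesssim\beta^{-1/2}\|\mathbf{u}\|_{\mathbb{U}}$. You are in fact more explicit than the paper on two points the paper glosses over: the reason the divergence bound holds with $\beta=\max\{\mu,\lambda\}$ (via $\nabla\cdot\mathbf{u}=\mathrm{tr}\,\epsilon(\mathbf{u})$ when $\mu\geq\lambda$), and the need for $\xi\beta\lesssim 1$ to identify the output norm $\|q\|_{\mathbb{N}}^2=\|q\|_{\mathbb{Q}}^2+\xi\|q\|_0^2$ with $\|q\|_{\mathbb{Q}}^2$ --- the paper's statement omits this hypothesis, but its proof implicitly relies on it when it says the assumptions on $c^I$ are ``straightforward to verify.''
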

\begin{proof}
As $a^{II}(\cdot,\cdot)$ is coercive on ${\mathbb W}$ due to the fact that
$$a^{II}(\mathbf{u},\mathbf{v};\mathbf{u},\mathbf{v})\equiv \|(\mathbf{u},\mathbf{v})\|_{\mathbb{W}}^2, ~\forall (\mathbf{u},\mathbf{v})\in {\mathbb W},$$
we can use Theorem \ref{thm:babuska1} to finish the proof.

First, we consider the inf-sup condition of $b^{II}(\cdot,\cdot)$.

Given that $q\in \mathbb{Q}$, we have the following inequalities:
$$\sup_{(\mathbf{u},\mathbf{v})}\frac{b^{II}(\mathbf{u},\mathbf{v};q)}{||(\mathbf{u},\mathbf{v})||_{\mathbb{W}}} \geq \sup_{\mathbf{u}}\frac{(\nabla \cdot \mathbf{u},q)}{||\mathbf{u}||_{\mathbb{U}}}\gtrsim  \sup_{\mathbf{u}}\frac{(\nabla \cdot \mathbf{u},q)}{\beta^{1/2}||\mathbf{u}||_1}\gtrsim \beta^{-1/2}\|q\|_0,$$

$$\sup_{(\mathbf{u},\mathbf{v})}\frac{b^{II}(\mathbf{u},\mathbf{v};q)}{||(\mathbf{u},\mathbf{v})||_{\mathbb{W}}} \geq \sup_{\mathbf{v}}\frac{(\nabla \cdot\mathbf{v},q)}{||\mathbf{v}||_{\mathbb{V}}} = \|\mbox{div}^*_V q\|_{\mathbb{V}'}.$$

Therefore, we have
$$\sup_{(\mathbf{u},\mathbf{v})}\frac{b^{II}(\mathbf{u},\mathbf{v};q)}{||(\mathbf{u},\mathbf{v})||_{\mathbb{W}}} \gtrsim \|q\|_{\mathbb{Q}}.$$

The boundedness of $b^{II}(\cdot,\cdot)$ can be shown to be uniform:
$$
\begin{aligned}
&b^{II}(\mathbf{u},\mathbf{v}; p)=(\nabla\cdot \mathbf{u},p)+(\nabla\cdot\mathbf{v},p)\\
\leq& \|\mathbf{u}\|_{\mathbb{U}}\frac{1}{\beta^{1/2}}\|p\|_0+\|\mbox{div}^* p\|_{\mathbb{V}'}\|\mathbf{v}\|_{\mathbb{V}}\lesssim \|(\mathbf{u},\mathbf{v})\|_{\mathbb{W}}\|p\|_{\mathbb{Q}}.
\end{aligned}
$$

The boundedness of $a^{II}(\cdot,\cdot)$ and the assumptions on $c^I(\cdot,\cdot)$ are straightforward to verify.

\end{proof}

%

\subsubsection{Matrix form}
The block preconditioner is as follows:

$$P_3^{III} = 
\left(
\begin{array}{ccc}
A_u &&\\
&A_v&\\
&& {\beta^{-1}}M_p+\kappa^{-1}B_vM_v^{-1}B_v^T+C_p\\
\end{array}
\right)^{-1},
$$
where $\beta = \max\{ \mu,\lambda\}$.

\subsection{Compare with Schur complement based preconditioners}

In \cite{Axelsson.O;Blaheta.R;Byczanski.P2012a}, block preconditioners are proposed for the discretized Biot model of the following form:
\begin{equation}
\label{eq:block_reg}
\left(
\begin{array}{ccc}
A_u & &B_u^T\\
& A_v& B_v^T\\
B_u&B_v&-C \\
\end{array}
\right)
\left(
\begin{array}{c}
\bu\\
\bv\\
p\\
\end{array}
\right)
=
\left(
\begin{array}{c}
f\\
r\\
g\\
\end{array}
\right),
\end{equation}
where $C$ is the pressure mass matrix $M_p$ with constant coefficient. Block preconditioners for the case $C=M_p$ are proposed:
\begin{itemize}
\item pressure Schur complement:
$$P_{ps}=\left(
\begin{array}{ccc}
A_u &&\\
&A_v&\\
&& -C-B_vD_v^{-1}B_v^T\\
\end{array}
\right)^{-1},$$
where $-C-B_vD_v^{-1}B_v^T$ is shown in \cite{Lipnikov.K2002a} to be spectrally equivalent to the exact Schur complement $-C-B_uA_u^{-1}B_u^T-B_vA_v^{-1}B_v^T$.  This preconditioner is also used in \cite{Turan.E;Arbenz.P2014a}.
\item displacement-velocity Schur complement:
$$P_{uvs}=\left(
\begin{array}{ccc}
A_u+\beta B_u^TC^{-1}B_u &\beta B_u^TC^{-1}B_v&\\
\beta B_v^TC^{-1}B_u&A_v+\beta B_v^TC^{-1}B_v&\\
&& -C\\
\end{array}
\right)^{-1}.
$$
\end{itemize}

{
It is shown in \cite{Axelsson.O;Blaheta.R;Byczanski.P2012a} that exact solutions of the first $2$-by-$2$ block of {$P_1^{III}$}, i.e.,
\begin{equation}
\label{eq:2by2block}
\left(
\begin{array}{cc}
A_u+\beta B_u^TM_p^{-1}B_u &\beta B_u^TM_p^{-1}B_v\\
\beta B_v^TM_p^{-1}B_u&A_v+\beta B_v^TM_p^{-1}B_v\\
\end{array}
\right)^{-1},
\end{equation}
result in uniform preconditioners. However, effective iterative solvers must be used for the inner iterations. In \cite{Axelsson.O;Blaheta.R;Byczanski.P2012a}, two block preconditioners, 
\begin{equation}
\label{eq:prec_2by2block}
\left(
\begin{array}{cc}
A_u &\\
&A_v\\
\end{array}
\right)^{-1}
\quad \mbox{ and }
\quad
\left(
\begin{array}{cc}
A_u+\beta B_u^TM_p^{-1}B_u &\\
&A_v+\beta B_v^TM_p^{-1}B_v\\
\end{array}
\right)^{-1},
\end{equation}
are used to precondition (\ref{eq:2by2block}).   The numerical tests in \cite{Axelsson.O;Blaheta.R;Byczanski.P2012a} show that the second preconditioner in (\ref{eq:prec_2by2block}) results in a far fewer iterations for the inner iterative solvers than the first one. However, this approach introduces an additional loop of iterative solvers.  In \cite{Ferronato.M;Castelletto.N;Gambolati.G2010a}, (\ref{eq:2by2block}) is directly approximated by the second preconditioner in  (\ref{eq:prec_2by2block}) and incomplete Cholesky factorization is used to further approximate the preconditioner.

The preconditioners, $P_2^{III}$ and $P_3^{III}$, that we proposed are provably optimal and given their block diagonal form they are easy to implement.  Further, $P_2^{III}$ and $P_3^{III}$ have another advantage: they apply to the case where the diagonal block matrix $ C=0$ (i.e., the fluid storage coefficient $S$ is zero), even though $P_2^{III}$ is subject to the constraint $\xi\leq \beta^{-1}$.
}

\subsection{Values of $\xi\beta$ for various  poroelastic materials}
Although some of the preconditioners we proposed depend on the assumption that $\xi\beta$ is uniformly bounded, $\xi\beta$ is usually small in various poroelastic materials. In Table \ref{tb:xibeta}, we calculate the corresponding values of $\xi\beta$ based on the poroelastic constants in \cite{CHENG.A2014a}.
\begin{table}[htp]
\caption{Values of $\xi\beta$ for various poroelastic materials}
\begin{center}
\begin{tabular}{|c|c||c|c|}
\hline
&$\xi\beta$&&$\xi\beta$\\
\hline
Ruhr sandstone &2.3836& Tennessee marble& 12.1667\\
Charcoal granite & 6.7635 & Berea sandstone& 2.3192 \\
Westerly granite & 2.5972 & Weber sandstone & 2.9235\\
Ohio sandstone & 3.5965 & Pecos sandstone & 2.5322\\
Boise sandstone & 2.4860&&\\
\hline
\end{tabular}
\end{center}
\label{tb:xibeta}
\end{table}%

\section{Numerical tests}
\label{sec:numerics}
We test the preconditioners using the poroelastic footing experiment (2D) (see \cite{Aguilar.G;Gaspar.F;Lisbona.F;Rodrigo.C2008a}).  The domain is $\Omega=(-4, 4)\times(-4, 4)$.  Define
$$\Gamma_1=\{(x,y)\in \partial\Omega, |x|\leq 0.8, y=4\},~~~\Gamma_2=\{(x,y)\in \partial\Omega, |x|> 0.8, y=4\}.$$

The boundary conditions are as follows:
\begin{equation*}
\begin{aligned}
&(\sigma_{e}- p\mathbf{I})\mathbf n=-10^4,  \mathbf v\cdot \mathbf n=0, &\mbox{on } \Gamma_1, \\
&(\sigma_{e}- p\mathbf{I})\mathbf n= 0, p=0, &\mbox{on } \Gamma_2, \\
&\mathbf{u}=0,\mathbf{v}\cdot \mathbf n=0, &\mbox{on } \partial\Omega\slash(\Gamma_1\cup\Gamma_2).\\
\end{aligned}
\end{equation*}
 We assume that the fluid storage coefficient is $S=0$ and the other material parameters are varying in huge range.
.
 
 We discretize the problem using FEniCS \cite{Logg.A;Mardal.K;Wells.G;others2012a}.  We show the robustness of the preconditioners with respect to problem sizes and varying parameters.  We discretize the problem on uniform triangular meshes.  We 
 use continuous Garlerkin (CG) method with $P_2\times RT_1\times P_0$  and  discontinuous Garlerkin (DG) method with $BDM_1 \times RT_1\times P_0$ for the three-field formulation \cite{hong2018parameter}. Thus, the inf-sup conditions of $b^{II}(\cdot,\cdot)$ for both $\mathbf{u}$-$p$ and $\mathbf{v}$-$p$ are satisfied.  We present the number of iterations of preconditioned MINRES (PMINRES) with the preconditioners for three-field formulation in Tables \ref{tableIII7}-\ref{tableIII0} (CG discretization) and Tables \ref{tableIII7DG}-\ref{tableIII0DG}  (DG discretization).  For each of the preconditioners we showed, the number of iterations does not vary much with respect to the changing parameters and problem sizes. 
 In addition, we also show the condition numbers of the unpreconditioned and preconditioned systems matrices on the coarsest mesh ($16\times 16$) in Table
 \ref{tb:condIII}.   The condition numbers of the preconditioned systems are almost constant and close to $1$.

\begin{table}[htbp]
\centering
\caption{Number of  iterations of PMINRES  with CG discretization for  three-field scheme with $\kappa = 10^{7}$.}
\vspace{1.5mm}
\begin{tabular}{|c|c||c|c|c||c|c|c||c|c|c|c|}\hline
  \multirow{2}{*}{ $\nu$}    &       \multirow{2}{*}{$h$}       
  & \multicolumn{3}{c||}{$E = 3\times 10^{4}$ }      &  \multicolumn{3}{c||}{$E = 3\times 10^{5}$}       &  \multicolumn{3}{c|}{$E = 3\times 10^{6} $}     \\
    \cline{3-11} 
&  &  $P_1^{III}$   &    $P_2^{III}$   &   $P_3^{III}$   
&    $P_1^{III}$   &    $P_2^{III}$   &   $P_3^{III}$    
&  $P_1^{III}$   &    $P_2^{III}$   &   $P_3^{III}$  \\  \hline
 \multirow{4}{*}{$0.4$ }   
 & $1/16$        & 28   & 35   & 27           & 36   & 51  & 43       & 34   & 49    & 51     \\ 
  \cline{2-11}
 & $1/32$        & 33   & 47   & 37           & 37   & 55  & 53       & 33   & 51    & 55    \\ 
  \cline{2-11}
 & $1/64$        & 35   & 54   & 47           & 36   & 57  & 57       & 33   & 51    & 57   \\
  \cline{2-11} 
 & $1/128$    & 36   & 56   & 55           & 36   & 58  & 60       & 33   & 51    & 59    \\    \hline\hline
 
 \multirow{4}{*}{$ 0.49$ }  
 & $1/16$          & 29   & 40   & 29           & 30   & 42  & 37       & 28   & 37    & 35     \\ 
  \cline{2-11}
 & $1/32$          & 30   & 42   & 35           & 30   & 42  & 39       & 28   & 37    & 37    \\ 
  \cline{2-11} 
 & $1/64$          & 30   & 44   & 39           & 30   & 42  & 39       & 26   & 37    & 37   \\
 \cline{2-11} 
 & $1/128$      & 29   & 44   & 39           & 29   & 42  & 41       & 26   & 37    & 37   \\   \hline\hline
  
 \multirow{4}{*}{$0.495$ }  
 & $1/16$          & 29   & 40   & 31           & 29   & 37  & 37       & 26   & 35    & 33     \\ 
  \cline{2-11}
 & $1/32$          & 29   & 42   & 35           & 28   & 38  & 37       & 26   & 35    & 35    \\ 
  \cline{2-11} 
 & $1/64$          & 28   & 42   & 37           & 28   & 38  & 39       & 26   & 35    & 35   \\ 
 \cline{2-11} 
 & $1/128$      & 28   & 42   & 39           & 28   & 38  & 39       & 26   & 35    & 35   \\     \hline \hline

\multirow{4}{*}{$0.499$ }  
 & $1/16$         & 27   & 39   & 33           & 26   & 35  & 33       & 23   & 30    & 31     \\ 
  \cline{2-11}
 & $1/32$         & 28   & 40   & 35           & 26   & 35  & 35       & 23   & 30    & 33     \\ 
  \cline{2-11} 
 & $1/644$         & 27   & 40   & 35           & 26   & 35  & 35       & 23   & 30    & 33     \\
 \cline{2-11} 
 & $1/128$     & 27   & 40   & 37           & 26   & 35  & 35       & 23   & 30    & 33    \\ \hline
\end{tabular}
\label{tableIII7}
\end{table}
\begin{table}[htbp]
\centering
\caption{Number of  iterations of PMINRES  with CG discretization for  three-field scheme with  $\kappa = 10^{5}$. }
\vspace{1.5mm}
\begin{tabular}{|c|c||c|c|c||c|c|c||c|c|c|c|}\hline
\multirow{2}{*}{ $\nu$}    &       \multirow{2}{*}{$h$}       
 & \multicolumn{3}{c||}{$E = 3\times 10^{4}$ }      &  \multicolumn{3}{c||}{$E = 3\times 10^{5}$}       &  \multicolumn{3}{c|}{$E = 3\times 10^{6} $}     \\
 \cline{3-11} 
&  
&  $P_1^{III}$   &    $P_2^{III}$   &   $P_3^{III}$   
&    $P_1^{III}$   &    $P_2^{III}$   &   $P_3^{III}$    
&  $P_1^{III}$   &    $P_2^{III}$   &   $P_3^{III}$  \\  \hline
\multirow{4}{*}{$0.4$ }  
 & $1/16$          & 34   & 49   & 51           & 29   & 40  & 41       & 24   & 33    & 35     \\ 
  \cline{2-11}
 & $1/32$          & 33   & 51   & 55           & 29   & 41  & 43       & 24   & 34    & 35    \\ 
  \cline{2-11} 
 & $1/64$          & 33   & 51   & 57           & 29   & 41  & 43       & 24   & 34    & 35   \\ 
 \cline{2-11} 
 & $1/128$      & 33   & 51   & 59           & 27   & 41  & 44       & 24   & 34    &37    \\ \hline\hline
 
 \multirow{4}{*}{$0.49$ }  
 & $1/16$          & 28   & 37   & 35           & 24   & 32  & 33       & 21   & 27    & 31     \\ 
  \cline{2-11}
 & $1/32$          & 28   & 37   & 37           & 24   & 32  & 35       & 21   & 28    & 33    \\ 
  \cline{2-11} 
 & $1/64$          & 26   & 37   & 37           & 24   & 32  & 35       & 21   & 28    & 33   \\ 
 \cline{2-11} 
 & $1/128$      & 26   & 37   & 37           & 24   & 32  & 35       & 21   & 28    & 33   \\   \hline\hline
  
 \multirow{4}{*}{$ 0.495$ }  
 & $1/16$          & 26   & 35   & 33           & 22   & 30  & 33       & 21   & 26    & 31     \\ 
  \cline{2-11}
 & $1/32$          & 26   & 35   & 35           & 22   & 31  & 33       & 21   & 27    & 31    \\ 
  \cline{2-11} 
 & $1/64$          & 26   & 35   & 35           & 22   & 31  & 33       & 21   & 27    & 31   \\ 
 \cline{2-11} 
 & $1/128$      & 26   & 35   & 35           & 22   & 31  & 33       & 21   & 27    & 31   \\   \hline\hline
 
  \multirow{4}{*}{$0.499$ }  
 & $1/16$          & 23   & 30   & 31           & 20   & 26  & 29       & 18   & 23    & 29     \\ 
  \cline{2-11}
 & $1/32$          & 23   & 30   & 33           & 20   & 26  & 31       & 19   & 23    & 29    \\ 
  \cline{2-11} 
 & $1/64$          & 23   & 30   & 33           & 20   & 26  & 31       & 19   & 23    & 29   \\ 
 \cline{2-11} 
 & $1/128$      & 23   & 30   & 33           & 20   & 26  & 31       & 19   & 23    & 29   \\   \hline
 
\end{tabular}
\label{tableIII5}
\end{table}

\begin{table}[htbp]
\centering
\caption{Number of  iterations of PMINRES  with CG discretization for  three-field scheme with $\kappa = 10^{3}$. }
\vspace{1.5mm}
\begin{tabular}{|c|c||c|c|c||c|c|c||c|c|c|c|}\hline
\multirow{2}{*}{ $\nu$}    &       \multirow{2}{*}{$h$}       
& \multicolumn{3}{c||}{$E = 3\times 10^{4}$ }      &  \multicolumn{3}{c||}{$E = 3\times 10^{5}$}       &  \multicolumn{3}{c|}{$E = 3\times 10^{6} $}     \\
\cline{3-11} 
&  
&  $P_1^{III}$   &    $P_2^{III}$   &   $P_3^{III}$   
&    $P_1^{III}$   &    $P_2^{III}$   &   $P_3^{III}$    
&  $P_1^{III}$   &    $P_2^{III}$   &   $P_3^{III}$  \\  \hline
 \multirow{4}{*}{$ 0.4$ }  
 & $1/16$          & 24   & 33   & 35           & 21   & 28  & 33       & 20   & 25    & 31     \\ 
  \cline{2-11}
 & $1/32$          & 24   & 34   & 35           & 21   & 28  & 33       & 20   & 25    & 31    \\ 
  \cline{2-11} 
 & $1/64$          & 24   & 34   & 35           & 21   & 28  & 33       & 20   & 25    & 31   \\ 
 \cline{2-11} 
 & $1/128$      & 24   & 34   & 37           & 21   & 28  & 33       & 20   & 25    & 31    \\  \hline\hline
 
 \multirow{4}{*}{$ 0.49$ }  
 & $1/16$          & 21   & 27   & 31           & 19   & 24  & 29       & 17   & 22    & 27     \\ 
  \cline{2-11}
 & $1/32$          & 21   & 28   & 33           & 19   & 24  & 31       & 17   & 22    & 29    \\ 
  \cline{2-11} 
 & $1/64$          & 21   & 28   & 33           & 19   & 24  & 31       & 17   & 22    & 29   \\ 
 \cline{2-11} 
 & $1/128$      & 21   & 28   & 33           & 19   & 24  & 31       & 17   & 22    & 29   \\   \hline\hline
  
 \multirow{4}{*}{$ 0.495$ }  
 & $1/16$          & 21   & 26   & 31           & 19   & 23  & 29       & 17   & 21    & 27     \\ 
  \cline{2-11}
 & $1/32$          & 21   & 27   & 31           & 19   & 23  & 29       & 17   & 21    & 27    \\ 
  \cline{2-11} 
 & $1/64$          & 21   & 27   & 31           & 19   & 23  & 29       & 17   & 21    & 27   \\
  \cline{2-11} 
 & $1/128$      & 21   & 27   & 31           & 19   & 23  & 29       & 17   & 21    & 27    \\ \hline \hline
 
  \multirow{4}{*}{$0.499$ }  
 & $1/16$          & 18   & 23   & 29           & 16   & 21  & 27       & 15   & 19    & 25     \\ 
  \cline{2-11}
 & $1/32$          & 19   & 23   & 29           & 16   & 21  & 27       & 15   & 19    & 25    \\ 
  \cline{2-11} 
 & $1/64$          & 19   & 23   & 29           & 16   & 21  & 27       & 15   & 19    & 25   \\
  \cline{2-11} 
 & $1/128$      & 19   & 23   & 29           & 16   & 21  & 27       & 15   & 19    & 25    \\  \hline
\end{tabular}
\label{tableIII3}
\end{table}
\begin{table}[htbp]
\centering
\caption{Number of  iterations of PMINRES  with CG discretization for  three-field scheme with  $\kappa = 10$. }
\vspace{1.5mm}
\begin{tabular}{|c|c||c|c|c||c|c|c||c|c|c|c|}\hline
\multirow{2}{*}{ $\nu$}    &       \multirow{2}{*}{$h$}       
& \multicolumn{3}{c||}{$E = 3\times 10^{4}$ }      &  \multicolumn{3}{c||}{$E = 3\times 10^{5}$}       &  \multicolumn{3}{c|}{$E = 3\times 10^{6} $}     \\
\cline{3-11} 
&  
 &  $P_1^{III}$   &    $P_2^{III}$   &   $P_3^{III}$   
&    $P_1^{III}$   &    $P_2^{III}$   &   $P_3^{III}$    
&  $P_1^{III}$   &    $P_2^{III}$   &   $P_3^{III}$  \\   \hline
 \multirow{4}{*}{$0.4$ }  
 & $1/16$          & 20   & 25   & 31           & 18   & 22  & 29       & 17   & 20    & 27     \\ 
  \cline{2-11}
 & $1/32$          & 20   & 25   & 31           & 18   & 22  & 29       & 17   & 20    & 27    \\ 
  \cline{2-11} 
 & $1/64$          & 20   & 25   & 31           & 18   & 22  & 29       & 17   & 20    & 27   \\ 
 \cline{2-11} 
 & $1/128$      & 20   & 25   & 31           & 18   & 22  & 29       & 18   & 20    & 27  \\    \hline\hline
 
 \multirow{4}{*}{$0.49$ }  
 & $1/16$          & 17   & 22   & 27           & 16   & 19  & 27       & 14   & 17    & 25     \\ 
  \cline{2-11}
 & $1/32$          & 17   & 22   & 29           & 16   & 20  & 27       & 14   & 18    & 25    \\ 
  \cline{2-11} 
 & $1/64$          & 17   & 22   & 29           & 16   & 20  & 27       & 14   & 17    & 25   \\ 
  \cline{2-11} 
 & $1/128$      & 17   & 22   & 29           & 16   & 20  & 27       & 14   & 17    & 25   \\    \hline\hline
  
 \multirow{4}{*}{$0.495$ }  
 & $1/16$          & 17   & 21   & 27           & 16   & 19  & 25       & 14   & 17    & 23     \\ 
  \cline{2-11}
 & $1/32$          & 17   & 21   & 27           & 16   & 19  & 25       & 14   & 17    & 23    \\ 
  \cline{2-11} 
 & $1/64$          & 17   & 21   & 27           & 16   & 19  & 25       & 14   & 17    & 23   \\ 
 \cline{2-11} 
 & $1/128$      & 17   & 21   & 27           & 16   & 19  & 25       & 14   & 17    & 23   \\    \hline\hline

  \multirow{4}{*}{$0.499$ }  
 & $1/16$          & 15   & 19   & 25           & 13   & 17  & 23       & 13   & 16    & 21     \\ 
  \cline{2-11}
 & $1/32$          & 15   & 19   & 25           & 13   & 17  & 23       & 13   & 16    & 21    \\ 
  \cline{2-11} 
 & $1/64$          & 15   & 19   & 25           & 13   & 17  & 23       & 13   & 16    & 21   \\ 
 \cline{2-11} 
 & $1/128$      & 15   & 19   & 25           & 13   & 17  & 23       & 13   & 16    & 21   \\    \hline
\end{tabular}
\label{tableIII1}
\end{table}
\begin{table}[htbp]
\centering
\caption{Number of  iterations of PMINRES  with CG discretization for  three-field scheme with $\kappa = 1$. }
\vspace{1.5mm}
\begin{tabular}{|c|c||c|c|c||c|c|c||c|c|c|c|}\hline
\multirow{2}{*}{ $\nu$}    &       \multirow{2}{*}{$h$}       
& \multicolumn{3}{c||}{$E = 3\times 10^{4}$ }      &  \multicolumn{3}{c||}{$E = 3\times 10^{5}$}       &  \multicolumn{3}{c|}{$E = 3\times 10^{6} $}     \\
\cline{3-11} 
&  
&  $P_1^{III}$   &    $P_2^{III}$   &   $P_3^{III}$   
&    $P_1^{III}$   &    $P_2^{III}$   &   $P_3^{III}$    
&  $P_1^{III}$   &    $P_2^{III}$   &   $P_3^{III}$  \\  \hline
 \multirow{4}{*}{$0.4$ }  
 & $1/16$          & 18   & 22   & 29           & 17   & 20  & 27       & 15   & 20    & 25     \\ 
  \cline{2-11}
 & $1/32$          & 18   & 22   & 29           & 17   & 20  & 27       & 15   & 20    & 25    \\ 
  \cline{2-11} 
 & $1/64$          & 18   & 22   & 29           & 17   & 20  & 27       & 15   & 20    & 25   \\
  \cline{2-11} 
 & $1/128$      & 18   & 22   & 29           & 17   & 20  & 27       & 15   & 20    & 25   \\    \hline\hline
 
 \multirow{4}{*}{$0.49$ }  
 & $1/16$          & 16   & 20   & 27           & 14   & 17  & 25       & 13   & 17    & 23     \\ 
  \cline{2-11}
 & $1/32$          & 16   & 20   & 27           & 14   & 18  & 25       & 13   & 17    & 23    \\ 
  \cline{2-11} 
 & $1/64$          & 16   & 20   & 27           & 14   & 17  & 25       & 13   & 17    & 23   \\ 
 \cline{2-11} 
 & $1/128$      & 16   & 20   & 27           & 14   & 17  & 25       & 13   & 17    & 23   \\  \hline\hline
  
 \multirow{4}{*}{$ 0.495$ }  
 & $1/16$          & 16   & 19   & 25           & 14   & 17  & 23       & 13   & 16    & 21     \\ 
  \cline{2-11}
 & $1/32$          & 16   & 19   & 25           & 14   & 17  & 23       & 13   & 16    & 21    \\ 
  \cline{2-11} 
 & $1/64$          & 16   & 19   & 25           & 14   & 17  & 23       & 13   & 16    & 21   \\ 
 \cline{2-11} 
 & $1/128$      & 16   & 19   & 25           & 14   & 17  & 23       & 13   & 16    & 21   \\   \hline\hline

 \multirow{4}{*}{$ 0.499$ }  
 & $1/16$          & 13   & 17   & 23           & 13   & 16  & 21       & 12   & 14    & 19     \\ 
  \cline{2-11}
 & $1/32$          & 13   & 17   & 23           & 13   & 16  & 21       & 12   & 14    & 19    \\ 
  \cline{2-11} 
 & $1/64$          & 13   & 17   & 23           & 13   & 16  & 21       & 12   & 14    & 19   \\
  \cline{2-11} 
 & $1/128$      & 13   & 17   & 23           & 13   & 16  & 21       & 12   & 14    & 19   \\    \hline
\end{tabular}
\label{tableIII0}
\end{table}

\begin{table}[htbp]
\centering
\caption{ Number of  iterations of PMINRES  with DG discretization for  three-field scheme with $\kappa = 10^{7}$.  }
\vspace{1.5mm}
\begin{tabular}{|c|c||c|c|c||c|c|c||c|c|c|c|}\hline
\multirow{2}{*}{ $\nu$}    &       \multirow{2}{*}{$h$}       
& \multicolumn{3}{c||}{$E = 3\times 10^{4}$ }      &  \multicolumn{3}{c||}{$E = 3\times 10^{5}$}       &  \multicolumn{3}{c|}{$E = 3\times 10^{6} $}     \\
\cline{3-11} 
&  
&  $P_1^{III}$   &    $P_2^{III}$   &   $P_3^{III}$   
&    $P_1^{III}$   &    $P_2^{III}$   &   $P_3^{III}$    
&  $P_1^{III}$   &    $P_2^{III}$   &   $P_3^{III}$  \\  
\hline 
 \multirow{4}{*}{$0.4$ }  
 & $1/16$        & 12   & 34   & 27           & 16   & 49  & 43       & 15   & 49    & 51     \\ 
  \cline{2-11}
 & $1/32$        & 14   & 46   & 37           & 16   & 55  & 53       & 15   & 51    & 51    \\ 
  \cline{2-11}
 & $1/64$        & 16   & 54   & 49           & 16   & 57  & 55       & 15   & 51    & 53   \\
  \cline{2-11} 
 & $1/128$    & 16   & 58   & 55           & 16   & 58  & 57       & 15   & 51    & 57    \\    \hline\hline
 
 \multirow{4}{*}{$0.49$ }  
 & $1/16$          & 10   & 40   & 29           & 10    & 41  & 37       & 16   & 37    & 35     \\ 
  \cline{2-11}
 & $1/32$          & 11   & 42   & 35           & 10   & 42   & 39       & 16   & 37    & 37    \\ 
  \cline{2-11}  
 & $1/64$          & 12   & 44   & 39           & 10   & 42   & 41       & 16   & 37    & 37   \\
 \cline{2-11} 
 & $1/128$      & 12   & 44   & 39           & 12   & 42   & 41       & 16   & 37    & 37   \\   \hline\hline
  
 \multirow{4}{*}{$0.495$ }  
 & $1/16$          & 10   & 40   & 31           & 10   & 37  & 37       & 16   & 35    & 35     \\ 
  \cline{2-11}
 & $1/32$          & 10   & 42   & 35           & 10   & 37  & 37       & 16   & 35    & 35    \\ 
  \cline{2-11} 
 & $1/64$          & 12   & 42   & 37           & 10   & 38  & 39       & 16   & 35    & 35   \\ 
 \cline{2-11} 
 & $1/128$      & 12   & 42   & 39           & 11   & 38  & 39       & 16   & 35    & 35   \\     \hline \hline

\multirow{4}{*}{$ 0.499$ }  
 & $1/16$         & 9     & 38   & 33           & 15   & 35  & 33       & 17   & 30    & 31     \\ 
  \cline{2-11}
 & $1/32$         & 10   & 39   & 35           & 15   & 35  & 35       & 17   & 30    & 33     \\ 
  \cline{2-11} 
 & $1/64$         & 11   & 40   & 37           & 15   & 35  & 35       & 17   & 30    & 33     \\
 \cline{2-11} 
 & $1/128$     & 11   & 40   & 37           & 15   & 35  & 35       & 17   & 30    & 33    \\ \hline
\end{tabular}
\label{tableIII7DG}
\end{table}

\begin{table}[htbp]
\centering
\caption{Number of  iterations of PMINRES  with DG discretization for  three-field scheme with $\kappa = 10^{5}$. }
\vspace{1.5mm}
\begin{tabular}{|c|c||c|c|c||c|c|c||c|c|c|c|}\hline
\multirow{2}{*}{ $\nu$}    &       \multirow{2}{*}{$h$}       
& \multicolumn{3}{c||}{$E = 3\times 10^{4}$ }      &  \multicolumn{3}{c||}{$E = 3\times 10^{5}$}       &  \multicolumn{3}{c|}{$E = 3\times 10^{6} $}     \\
\cline{3-11} 
&  
&  $P_1^{III}$   &    $P_2^{III}$   &   $P_3^{III}$   
&    $P_1^{III}$   &    $P_2^{III}$   &   $P_3^{III}$    
&  $P_1^{III}$   &    $P_2^{III}$   &   $P_3^{III}$  \\  \hline
 \multirow{4}{*}{$0.4$ }  
 & $1/16$          & 15   & 49   & 51           & 17   & 39  & 41       & 18   & 32    & 35     \\ 
  \cline{2-11}
 & $1/32$          & 15   & 51   & 51           & 17   & 39  & 43       & 18   & 33    & 37    \\ 
  \cline{2-11} 
 & $1/64$          & 15   & 51   & 53           & 17   & 40  & 43       & 18   & 33    & 37   \\ 
 \cline{2-11} 
 & $1/128$      & 15   & 51   & 59           & 17   & 41  & 45       & 18   & 33    &37    \\ \hline\hline
 
 \multirow{4}{*}{$0.49$ }  
 & $1/16$          & 16   & 37   & 35           & 17   & 32  & 33       & 17   & 27    & 31     \\ 
  \cline{2-11}
 & $1/32$          & 16   & 37   & 37           & 17   & 32  & 35       & 17   & 28    & 33    \\ 
  \cline{2-11} 
 & $1/64$          & 16   & 37   & 37           & 17   & 32  & 35       & 17   & 28    & 33   \\ 
 \cline{2-11} 
 & $1/128$      & 16   & 37   & 37           & 17   & 32  & 35       & 17   & 28    & 33   \\   \hline\hline
  
 \multirow{4}{*}{$0.495$ }  
 & $1/16$          & 16   & 35   & 35           & 17   & 29  & 33       & 17   & 26    & 31     \\ 
  \cline{2-11}
 & $1/32$          & 16   & 35   & 35           & 17   & 31  & 33       & 17   & 27    & 31    \\ 
  \cline{2-11} 
 & $1/64$          & 16   & 35   & 35           & 17   & 31  & 33       & 17   & 27    & 31   \\ 
 \cline{2-11} 
 & $1/128$      & 16   & 35   & 35           & 17   & 31  & 33       & 17   & 27    & 31   \\   \hline\hline
 
  \multirow{4}{*}{$ 0.499$ }  
 & $1/16$          & 17   & 30   & 31           & 16   & 26  & 31       & 16   & 23    & 29     \\ 
  \cline{2-11}
 & $1/32$          & 17   & 30   & 33           & 17   & 26  & 31       & 16   & 23    & 29    \\ 
  \cline{2-11} 
 & $1/64$          & 17   & 30   & 33           & 17   & 26  & 31       & 16   & 23    & 29   \\ 
 \cline{2-11} 
 & $1/128$      & 17   & 30   & 33           & 17   & 26  & 31       & 16   & 23    & 29   \\   \hline  \hline
 
\end{tabular}
\label{tableIII5DG}
\end{table}
\begin{table}[htbp]
\centering
\caption{Number of  iterations of PMINRES  with DG discretization for  three-field scheme with  $\kappa = 10^{3}$. }
\vspace{1.5mm}
\begin{tabular}{|c|c||c|c|c||c|c|c||c|c|c|c|}\hline
\multirow{2}{*}{ $\nu$}    &       \multirow{2}{*}{$h$}       
& \multicolumn{3}{c||}{$E = 3\times 10^{4}$ }      &  \multicolumn{3}{c||}{$E = 3\times 10^{5}$}       &  \multicolumn{3}{c|}{$E = 3\times 10^{6} $}     \\
\cline{3-11} 
&  
&  $P_1^{III}$   &    $P_2^{III}$   &   $P_3^{III}$   
&    $P_1^{III}$   &    $P_2^{III}$   &   $P_3^{III}$    
&  $P_1^{III}$   &    $P_2^{III}$   &   $P_3^{III}$  \\  \hline
 \multirow{4}{*}{$0.4$ }  
 & $1/16$          & 18   & 32   & 35           & 17   & 28  & 33       & 17   & 25    & 31     \\ 
  \cline{2-11}
 & $1/32$          & 18   & 33   & 37           & 18   & 28  & 33       & 17   & 25    & 31    \\ 
  \cline{2-11} 
 & $1/64$          & 18   & 33   & 37           & 18   & 29  & 33       & 17   & 25    & 31   \\ 
 \cline{2-11} 
 & $1/128$      & 18   & 33   & 37           & 18   & 29  & 33       & 17   & 25    & 31    \\  \hline\hline
 
 \multirow{4}{*}{$0.49$ }  
 & $1/16$          & 17   & 27   & 31           & 16   & 24  & 31       & 16   & 22    & 29     \\ 
  \cline{2-11}
 & $1/32$          & 17   & 28   & 33           & 17   & 24  & 31       & 16   & 22    & 29    \\ 
  \cline{2-11} 
 & $1/64$          & 17   & 28   & 33           & 17   & 24  & 31       & 16   & 22    & 29   \\ 
 \cline{2-11} 
 & $1/128$      & 17   & 28   & 33           & 17   & 24  & 31       & 16   & 22    & 29   \\   \hline\hline
  
 \multirow{4}{*}{$0.495$ }  
 & $1/16$          & 17   & 26   & 31           & 16   & 23  & 29       & 15   & 21    & 27     \\ 
  \cline{2-11}
 & $1/32$          & 17   & 27   & 31           & 16   & 23  & 29       & 15   & 21    & 27    \\ 
  \cline{2-11} 
 & $1/64$          & 17   & 27   & 31           & 16   & 23  & 29       & 15   & 21    & 27   \\
  \cline{2-11} 
 & $1/128$      & 17   & 27   & 31           & 16   & 23  & 29       & 15   & 21    & 27    \\ \hline \hline
 
  \multirow{4}{*}{$0.499$ }  
 & $1/16$          & 16   & 23   & 29           & 15   & 21  & 27       & 14   & 19    & 25     \\ 
  \cline{2-11}
 & $1/32$          & 16   & 23   & 29           & 15   & 21  & 27       & 14   & 19    & 25    \\ 
  \cline{2-11} 
 & $1/64$          & 16   & 23   & 29           & 15   & 21  & 27       & 14   & 19    & 25   \\
  \cline{2-11} 
 & $1/128$      & 16   & 23   & 29           & 15   & 21  & 27       & 14   & 19    & 25    \\  \hline
 
\end{tabular}
\label{tableIII3DG}
\end{table}

\begin{table}[htbp]
\centering
\caption{Number of  iterations of PMINRES  with DG discretization for  three-field scheme with  $\kappa = 10$. }
\vspace{1.5mm}
\begin{tabular}{|c|c||c|c|c||c|c|c||c|c|c|c|}\hline
\multirow{2}{*}{ $\nu$}    &       \multirow{2}{*}{$h$}       
& \multicolumn{3}{c||}{$E = 3\times 10^{4}$ }      &  \multicolumn{3}{c||}{$E = 3\times 10^{5}$}       &  \multicolumn{3}{c|}{$E = 3\times 10^{6} $}     \\
\cline{3-11} 
&  
&  $P_1^{III}$   &    $P_2^{III}$   &   $P_3^{III}$   
&    $P_1^{III}$   &    $P_2^{III}$   &   $P_3^{III}$    
&  $P_1^{III}$   &    $P_2^{III}$   &   $P_3^{III}$  \\  \hline
 \multirow{4}{*}{$0.4$ }  
 & $1/16$          & 17   & 25   & 31           & 16   & 22  & 29       & 15   & 20    & 27     \\ 
  \cline{2-11}
 & $1/32$          & 17   & 25   & 31           & 16   & 22  & 29       & 16   & 20    & 27    \\ 
  \cline{2-11} 
 & $1/64$          & 17   & 25   & 31           & 16   & 22  & 29       & 16   & 20    & 27   \\ 
 \cline{2-11} 
 & $1/128$      & 17   & 25   & 31           & 18   & 22  & 29       & 16   & 20    & 27  \\    \hline\hline
 
 \multirow{4}{*}{$0.49$ }  
 & $1/16$          & 16   & 22   & 29           & 15   & 20  & 27       & 14   & 17    & 25     \\ 
  \cline{2-11}
 & $1/32$          & 16   & 22   & 29           & 15   & 20  & 27       & 14   & 18    & 25    \\ 
  \cline{2-11} 
 & $1/64$          & 16   & 22   & 29           & 15   & 20  & 27       & 14   & 18    & 25   \\ 
  \cline{2-11} 
 & $1/128$      & 16   & 22   & 29           & 15   & 20  & 27       & 14   & 18    & 25   \\    \hline\hline
  
 \multirow{4}{*}{$0.495$ }  
 & $1/16$          & 15   & 21   & 27           & 15   & 19  & 25       & 14   & 17    & 23     \\ 
  \cline{2-11}
 & $1/32$          & 15   & 21   & 27           & 15   & 19  & 25       & 14   & 17    & 23    \\ 
  \cline{2-11} 
 & $1/64$          & 15   & 21   & 27           & 15   & 19  & 25       & 14   & 17    & 23   \\ 
 \cline{2-11} 
 & $1/128$      & 15   & 21   & 27           & 15   & 19  & 25       & 14   & 17    & 23   \\    \hline\hline

  \multirow{4}{*}{$0.499$ }  
 & $1/16$          & 14   & 19   & 25           & 13   & 16  & 23       & 12   & 16    & 21     \\ 
  \cline{2-11}
 & $1/32$          & 14   & 19   & 25           & 13   & 17  & 23       & 13   & 16    & 21    \\ 
  \cline{2-11} 
 & $1/64$          & 14   & 19   & 25           & 13   & 17  & 23       & 13   & 16    & 21   \\ 
 \cline{2-11} 
 & $1/128$      & 14   & 19   & 25           & 13   & 17  & 23       & 13   & 16    & 21   \\    \hline
 
\end{tabular}
\label{tableIII1DG}
\end{table}
\begin{table}[htbp]
\centering
\caption{Number of  iterations of PMINRES  with DG discretization for  three-field scheme with  $\kappa = 1$. }
\vspace{1.5mm}
\begin{tabular}{|c|c||c|c|c||c|c|c||c|c|c|c|}\hline
\multirow{2}{*}{ $\nu$}    &       \multirow{2}{*}{$h$}       
& \multicolumn{3}{c||}{$E = 3\times 10^{4}$ }      &  \multicolumn{3}{c||}{$E = 3\times 10^{5}$}       &  \multicolumn{3}{c|}{$E = 3\times 10^{6} $}     \\
\cline{3-11} 
&  
&  $P_1^{III}$   &    $P_2^{III}$   &   $P_3^{III}$   
&    $P_1^{III}$   &    $P_2^{III}$   &   $P_3^{III}$    
&  $P_1^{III}$   &    $P_2^{III}$   &   $P_3^{III}$  \\  \hline
 \multirow{4}{*}{$ 0.4$ }  
  & $1/16$          & 16   & 22   & 29           & 15   & 20  & 27       & 15   & 19    & 25     \\ 
  \cline{2-11}
 & $1/32$          & 16   & 22   & 29           & 16   & 20  & 27       & 15   & 19    & 25    \\ 
  \cline{2-11} 
 & $1/64$          & 16   & 22   & 29           & 16   & 20  & 27       & 15   & 20    & 25   \\
  \cline{2-11} 
 & $1/128$      & 16   & 22   & 29           & 16   & 20  & 27       & 15   & 20    & 25   \\    \hline\hline
 
 \multirow{4}{*}{$ 0.49$ }  
 & $1/16$          & 15   & 20   & 27           & 14   & 17  & 25       & 13   & 16    & 23     \\ 
  \cline{2-11}
 & $1/32$          & 15   & 20   & 27           & 14   & 18  & 25       & 13   & 17    & 23    \\ 
  \cline{2-11} 
 & $1/64$          & 15   & 20   & 27           & 14   & 18  & 25       & 13   & 17    & 23   \\ 
 \cline{2-11} 
 & $1/128$      & 15   & 20   & 27           & 14   & 18  & 25       & 13   & 17    & 23   \\  \hline\hline
  
 \multirow{4}{*}{$0.495$ }  
 & $1/16$          & 15   & 19   & 25           & 14   & 17  & 23       & 13   & 16    & 21     \\ 
  \cline{2-11}
 & $1/32$          & 15   & 19   & 25           & 14   & 17  & 23       & 13   & 16    & 23    \\ 
  \cline{2-11} 
 & $1/64$          & 15   & 19   & 25           & 14   & 17  & 23       & 13   & 16    & 23   \\ 
 \cline{2-11} 
 & $1/128$        & 15   & 19   & 25           & 14   & 17  & 23       & 13   & 16    & 23   \\   \hline\hline

 \multirow{4}{*}{$0.499$ }  
 & $1/16$          & 13   & 17   & 23           & 12   & 16  & 21       & 12   & 13    & 19     \\ 
  \cline{2-11}
 & $1/32$          & 13   & 17   & 23           & 13   & 16  & 21       & 12   & 14    & 19    \\ 
  \cline{2-11} 
 & $1/s64$        & 13   & 17   & 23           & 13   & 16  & 21       & 12   & 14    & 19   \\
  \cline{2-11} 
 & $1/128$        & 13   & 17   & 23           & 13   & 16  & 21       & 12   & 14    & 19   \\    \hline
 
\end{tabular}
\label{tableIII0DG}
\end{table}

\begin{table}[htp]
\caption{\small Conditioned number of the unpreconditioned and preconditioned (with $P_i^{III}$, $i=1,2,3$) system matrices on coarsest mesh}
\begin{center}
\begin{tabular}{|c|c|c|c|c|c|c|c|c|}
\hline 
	       &   \multicolumn{4}{c|}{$E=3\times 10^4$}  \\ \hline
  $\nu$ & N/A  & $P_1^{III}$ & $P_2^{III}$ &  $P_3^{III}$  \\
  $0.2$& $1.15\times 10^6$ & 1.27 & 3.67 & 4.33  \\ 
    $0.49$& $5.37\times 10^6$ & 1.05 & 4.00 & 4.29  \\
      $0.495$&  $1.08\times 10^7$ & 1.05 & 2.67 & 4.31  \\
                \hline	
  \end{tabular}
\end{center}
\begin{center}
\begin{tabular}{|c|c|c|c|c|c|c|c|c|}
\hline 
	       &  \multicolumn{4}{c|}{$E=3\times 10^5$} \\ \hline
  $\nu$ & N/A  & $P_1^{III}$ & $P_2^{III}$ &  $P_3^{III}$  \\
  $0.2$&  $1.15\times 10^7$ & 1.03 & 3.68 & 4.30 \\ 
    $0.49$&  $5.36\times 10^7$ & 1.01 & 4.00 & 4.30  \\
      $0.495$& $1.08\times 10^8$ & 1.01 & 2.67 & 4.30  \\
                \hline	
  \end{tabular}
\end{center}

\begin{center}
\begin{tabular}{|c|c|c|c|c|c|c|c|c|}
\hline 
	       &   \multicolumn{4}{c|}{$E=3\times 10^6$} \\ \hline
  $\nu$ & N/A  & $P_1^{III}$ & $P_2^{III}$ &  $P_3^{III}$  \\
  $0.2$&  $1.15\times 10^8$ & 1.00 & 3.68 & 4.30 \\ 
    $0.49$&  $5.36\times 10^8$ & 1.00 & 4.00 & 4.30 \\
      $0.495$&   $1.08\times 10^9$ & 1.00 & 2.67 & 4.30  \\
                \hline	
  \end{tabular}
\end{center}

\label{tb:condIII}
\end{table}%

\section{Concluding remarks}

In this paper we studied the well-posedness of the linear systems arising from discretized poroelasticity problems. We formulate block preconditioner for the two-filed Biot model and several preconditioners 
for the classical three-filed Biot model under the unified relationship framework between well-posedness and preconditioners.  By the unified theory, we show all the considered preconditioners are uniformly optimal with respect to material and discretization parameters.  
 The preconditioners have block diagonal form and reduce the global preconditioning to the local preconditioning. Numerical experiments have demonstrated the robustness of the preconditioners.  Although the blocks are inverted using direct method, we expect that replacing direct solvers by preconditioned iterative solvers (like multigrid preconditioned MINRES) will result in robust iterative solvers for the whole systems.  Although only block diagonal preconditioners are derived in this paper, these preconditioners can be also used to develop block triangular preconditioners \cite{Loghin.D;Wathen.A2004a}.  

\bibliographystyle{spmpsci}      
\bibliography{library}   

\begin{thebibliography}{10}
\providecommand{\url}[1]{{#1}}
\providecommand{\urlprefix}{URL }
\expandafter\ifx\csname urlstyle\endcsname\relax
  \providecommand{\doi}[1]{DOI~\discretionary{}{}{}#1}\else
  \providecommand{\doi}{DOI~\discretionary{}{}{}\begingroup
  \urlstyle{rm}\Url}\fi

\bibitem{adler2019robust}
Adler, J.H., Gaspar, F.J., Hu, X., Ohm, P., Rodrigo, C., Zikatanov, L.T.:
  Robust preconditioners for a new stabilized discretization of the poroelastic
  equations.
\newblock arXiv preprint arXiv:1905.10353  (2019)

\bibitem{adler2017robust}
Adler, J.H., Gaspar, F.J., Hu, X., Rodrigo, C., Zikatanov, L.T.: Robust block
  preconditioners for biot?s model.
\newblock In: International Conference on Domain Decomposition Methods, pp.
  3--16. Springer (2017)

\bibitem{Aguilar.G;Gaspar.F;Lisbona.F;Rodrigo.C2008a}
Aguilar, G., Gaspar, F., Lisbona, F., Rodrigo, C.: Numerical stabilization of
  biot's consolidation model by a perturbation on the flow equation.
\newblock International Journal for Numerical Methods in Engineering
  \textbf{75}(11), 1282--1300 (2008)

\bibitem{Axelsson.O;Blaheta.R;Byczanski.P2012a}
Axelsson, O., Blaheta, R., Byczanski, P.: Stable discretization of
  poroelasticity problems and efficient preconditioners for arising saddle
  point type matrices.
\newblock Computing and Visualization in Science \textbf{15}(4), 191--207
  (2012)

\bibitem{Babuska.I1971a}
Babu{\v{s}}ka, I.: Error-bounds for finite element method.
\newblock Numerische Mathematik \textbf{16}(4), 322--333 (1971)

\bibitem{Benzi.M;Olshanskii.M2006a}
Benzi, M., Olshanskii, M.A.: {An augmented Lagrangian-based approach to the
  Oseen problem}.
\newblock SIAM Journal on Scientific Computing \textbf{28}(6), 2095--2113
  (2006)

\bibitem{Bergamaschi.L;Ferronato.M;Gambolati.G2007a}
Bergamaschi, L., Ferronato, M., Gambolati, G.: Novel preconditioners for the
  iterative solution to fe-discretized coupled consolidation equations.
\newblock Computer Methods in Applied Mechanics and Engineering
  \textbf{196}(25), 2647--2656 (2007)

\bibitem{Biot.M1941a}
Biot, M.A.: General theory of three-dimensional consolidation.
\newblock Journal of Applied Physics \textbf{12}(2), 155--164 (1941)

\bibitem{Biot.M1955a}
Biot, M.A.: Theory of elasticity and consolidation for a porous anisotropic
  solid.
\newblock Journal of Applied Physics \textbf{26}(2), 182--185 (1955)

\bibitem{Boffi.D;Brezzi.F;Fortin.M2013a}
Boffi, D., Brezzi, F., Fortin, M.: Mixed finite element methods and
  applications, \emph{Springer Series in Computational Mathematics}, vol.~44.
\newblock Springer Berlin Heidelberg (2013)

\bibitem{Bramble.J2003a}
Bramble, J.H.: A proof of the inf--sup condition for the stokes equations on
  lipschitz domains.
\newblock Mathematical Models and Methods in Applied Sciences \textbf{13}(03),
  361--371 (2003)

\bibitem{Bramble.J;Lazarov.R;Pasciak.J2001a}
Bramble, J.H., Lazarov, R.D., Pasciak, J.E.: Least-squares methods for linear
  elasticity based on a discrete minus one inner product.
\newblock Computer methods in applied mechanics and engineering
  \textbf{191}(8), 727--744 (2001)

\bibitem{Brezzi.F;Fortin.M1991a}
Brezzi, F., Fortin, M.: Mixed and hybrid finite element methods.
\newblock Springer-Verlag New York (1991)

\bibitem{CHENG.A2014a}
Cheng, A.H.D.: Fundamentals of poroelasticity.
\newblock Analysis and Design Methods: Comprehensive Rock Engineering:
  Principles, Practice and Projects p. 113 (2014)

\bibitem{Elman.H;Silvester.D;Wathen.A2005a}
Elman, H., Silvester, D., Wathen, A.: {Finite elements and fast iterative
  solvers: with applications in incompressible fluid dynamics}.
\newblock Oxford University Press, USA (2005)

\bibitem{Ferronato.M;Castelletto.N;Gambolati.G2010a}
Ferronato, M., Castelletto, N., Gambolati, G.: A fully coupled 3-d mixed finite
  element model of biot consolidation.
\newblock Journal of Computational Physics \textbf{229}(12), 4813--4830 (2010)

\bibitem{Ferronato.M;Gambolati.G;Teatini.P2001a}
Ferronato, M., Gambolati, G., Teatini, P.: Ill-conditioning of finite element
  poroelasticity equations.
\newblock International Journal of Solids and Structures \textbf{38}(34),
  5995--6014 (2001)

\bibitem{Hackbusch.W1985a}
Hackbusch, W.: Multi-grid methods and applications, vol.~4.
\newblock Springer-Verlag Berlin (1985)

\bibitem{Haga.J;Osnes.H;Langtangen.H2011a}
Haga, J.B., Osnes, H., Langtangen, H.P.: Efficient block preconditioners for
  the coupled equations of pressure and deformation in highly discontinuous
  media.
\newblock International Journal for Numerical and Analytical Methods in
  Geomechanics \textbf{35}(13), 1466--1482 (2011)

\bibitem{Haga.J;Osnes.H;Langtangen.H2012b}
Haga, J.B., Osnes, H., Langtangen, H.P.: On the causes of pressure oscillations
  in low-permeable and low-compressible porous media.
\newblock International Journal for Numerical and Analytical Methods in
  Geomechanics \textbf{36}(12), 1507--1522 (2012)

\bibitem{Haga.J;Osnes.H;Langtangen.H2012a}
Haga, J.B., Osnes, H., Langtangen, H.P.: A parallel block preconditioner for
  large-scale poroelasticity with highly heterogeneous material parameters.
\newblock Computational Geosciences \textbf{16}(3), 723--734 (2012)

\bibitem{Hiptmair.R;Xu.J2007a}
Hiptmair, R., Xu, J.: {Nodal auxiliary space preconditioning in ${\bf H}({\rm
  curl})$ and ${\bf H}({\rm div})$ spaces}.
\newblock SIAM J. Numer. Anal. \textbf{45}(6), 2483----2509 (electronic)
  (2007).
\newblock \doi{10.1137/060660588}.
\newblock \urlprefix\url{http://dx.doi.org/10.1137/060660588}

\bibitem{hong2018parameter}
Hong, Q., Kraus, J.: Parameter-robust stability of classical three-field
  formulation of biot's consolidation model.
\newblock Electronic Transactions on Numerical Analysis \textbf{48}, 202--227
  (2018)

\bibitem{hong2018conservative}
Hong, Q., Kraus, J., Lymbery, M., Philo, F.: Conservative discretizations and
  parameter-robust preconditioners for biot and multiple-network flux-based
  poroelasticity models.
\newblock Numerical Linear Algebra with Applications p. e2242 (2018)

\bibitem{hong2019parameter}
Hong, Q., Kraus, J., Lymbery, M., Philo, F.: Parameter-robust uzawa-type
  iterative methods for double saddle point problems arising in biot's
  consolidation and multiple-network poroelasticity models.
\newblock arXiv preprint arXiv:1910.05883  (2019)

\bibitem{hong2018fixed}
Hong, Q., Kraus, J., Lymbery, M., Wheeler, M.F.: Parameter-robust convergence
  analysis of fixed-stress split iterative method for multiple-permeability
  poroelasticity systems.
\newblock arXiv preprint arXiv:1812.11809  (2018)

\bibitem{Lee.J;Mardal.K;Winther.R2015a}
Lee, J.J., Mardal, K.A., Winther, R.: Parameter-robust discretization and
  preconditioning of biot's consolidation model.
\newblock arXiv preprint arXiv:1507.03199  (2015)

\bibitem{lee2017parameter}
Lee, J.J., Mardal, K.A., Winther, R.: Parameter-robust discretization and
  preconditioning of biot's consolidation model.
\newblock SIAM Journal on Scientific Computing \textbf{39}(1), A1--A24 (2017)

\bibitem{Lipnikov.K2002a}
Lipnikov, K.: Numerical methods for the biot model in poroelasticity.
\newblock Ph.D. thesis, University of Houston (2002)

\bibitem{Logg.A;Mardal.K;Wells.G;others2012a}
Logg, A., Mardal, K.A., Wells, G.N., et~al.: Automated solution of differential
  equations by the finite element method, \emph{Lecture Notes in Computational
  Science and Engineering}, vol.~84.
\newblock Springer Berlin Heidelberg (2012)

\bibitem{Loghin.D;Wathen.A2004a}
Loghin, D., Wathen, A.J.: Analysis of preconditioners for saddle-point
  problems.
\newblock SIAM Journal on Scientific Computing \textbf{25}(6), 2029--2049
  (2004)

\bibitem{Mardal.K;Winther.R2011a}
Mardal, K.A., Winther, R.: Preconditioning discretizations of systems of
  partial differential equations.
\newblock Numerical Linear Algebra with Applications \textbf{18}(1), 1--40
  (2011)

\bibitem{Murad.M;Loula.A1994a}
Murad, M.A., Loula, A.F.: On stability and convergence of finite element
  approximations of biot's consolidation problem.
\newblock International Journal for Numerical Methods in Engineering
  \textbf{37}(4), 645--667 (1994)

\bibitem{Phoon.K;Toh.K;Chan.S;Lee.F2002a}
Phoon, K., Toh, K., Chan, S., Lee, F.: An efficient diagonal preconditioner for
  finite element solution of biot's consolidation equations.
\newblock International Journal for Numerical Methods in Engineering
  \textbf{55}(4), 377--400 (2002)

\bibitem{Raviart.P;Thomas.J1977a}
Raviart, P.A., Thomas, J.M.: A mixed finite element method for 2-nd order
  elliptic problems.
\newblock In: Mathematical aspects of finite element methods, pp. 292--315.
  Springer (1977)

\bibitem{Rice.J2001a}
Rice, J.: Elasticity of fluid-infiltrated porous solids.
\newblock Handout of Advanced Environmental Geomechanics  (2001)

\bibitem{rodrigo2018new}
Rodrigo, C., Hu, X., Ohm, P., Adler, J.H., Gaspar, F.J., Zikatanov, L.T.: New
  stabilized discretizations for poroelasticity and the stokes? equations.
\newblock Computer Methods in Applied Mechanics and Engineering \textbf{341},
  467--484 (2018)

\bibitem{Toh.K;Phoon.K;Chan.S2004a}
Toh, K.C., Phoon, K.K., Chan, S.H.: Block preconditioners for symmetric
  indefinite linear systems.
\newblock International Journal for Numerical Methods in Engineering
  \textbf{60}(8), 1361--1381 (2004)

\bibitem{Turan.E;Arbenz.P2014a}
Turan, E., Arbenz, P.: Large scale micro finite element analysis of 3d bone
  poroelasticity.
\newblock Parallel Computing \textbf{40}(7), 239 -- 250 (2014).
\newblock \doi{http://dx.doi.org/10.1016/j.parco.2013.09.002}.
\newblock
  \urlprefix\url{http://www.sciencedirect.com/science/article/pii/S0167819113001063}.
\newblock 7th Workshop on Parallel Matrix Algorithms and Applications

\bibitem{Xu.J1992a}
Xu, J.: Iterative methods by space decomposition and subspace correction.
\newblock SIAM review \textbf{34}(4), 581--613 (1992)

\bibitem{Xu.J2015a}
Xu, J.: Finite element methods (2015).
\newblock Lecture notes

\bibitem{Xu.J;Yang.K2015a}
Xu, J., Yang, K.: Well-posedness and robust preconditioners for discretized
  fluid--structure interaction systems.
\newblock Computer Methods in Applied Mechanics and Engineering
  \textbf{292}(0), 69--91 (2015).
\newblock \doi{http://dx.doi.org/10.1016/j.cma.2014.09.034}.
\newblock {S}pecial Issue on Advances in Simulations of Subsurface Flow and
  Transport (Honoring Professor Mary F. Wheeler)

\end{thebibliography}

%
%

\end{document}